\newtheorem{theorem}{Theorem}[section]
\newtheorem{corollary}[theorem]{Corollary}
\newtheorem{lemma}[theorem]{Lemma}
\newtheorem{proposition}[theorem]{Proposition}
\newtheorem{remark}[theorem]{Remark}
\def \nmatrix #1 #2 #3 #4 {
#1=\begin{pmatrix}
 0 & #3 & #4 \\
 0 & 0  & #2 \\
 0 & 0  & 0
\end{pmatrix}
}
\def \End {\operatorname{End}}
\def \a{\operatorname{\alpha}}
\def \g{\operatorname{\gamma}}
\def \SL {\operatorname{SL}}
\def \SU {\operatorname{SU}}
\def \GL {\operatorname{GL}}
\def \Ad{\operatorname{Ad}}
\def\Tr{\operatorname{Tr}}
\def \nc {A_{\theta}}
\begin{document}

\title [Non-commutative sigma model]
       {A note on non-linear $\sigma$-models in noncommutative geometry}

\begin{abstract}
 We study non-linear $\sigma$-models defined on non-commutative torus as two dimensional string world-sheet.  We consider a quantum group as a noncommutative space-time as well as two points, a circle, and a (irrational) noncommutative trous. Using the established results  we show that the trivial harmonic unitaries of the noncommutative chiral model, which are known as local minima, are not global minima by comparing those with the symmetric unitaries coming from instanton solutions of non-commutative Ising model, which corresponds to the two points target space.  In addition, we introduce a $\mathbb{Z}^2$-action on field maps to $\nc$ and show how it acts on solutions of various Euler-Lagrange equations. 
\end{abstract}

\author { Hyun Ho \quad Lee }
\address {Department of Mathematics\\
         University of Ulsan\\
         Ulsan, Korea 608-749 }
\email{hadamard@ulsan.ac.kr} \keywords{nonlinear $\sigma$-model, the spectral triple, noncommutative torus, harmonic maps} \subjclass[2000]{Primary:46L05, Secondary:70S15.}
\date{Sep15, 2014}
\maketitle
\maketitle

\section {Introduction}

Following  Connes' foundational work of general formulation of action functionals in noncommutative space-time \cite{Co}, classical non-linear $\sigma$-models in (open) string theory can be formulated in the framework of noncommutative geometry. The noncommutative counterpart of $\sigma$-model is called noncommutative $\sigma$-model and consider as field maps $*$-homomorphisms from a noncommutative space-time to a noncommutative string world-sheet. It is worth to mention Mathai and Rosenberg's recent works on T-duality demonstrating an increasing role of noncommutative space-time in quantum field theory\cite{MaRo1, MaRo2}.\\ 

Concrete examples of noncommutative $\sigma$-model appeared in \cite{DKL}, and more sophisticated works were done by Mathai and Rosenberg \cite{MaRo}. They also give a general set-up for noncommutative $\sigma$-model which is our starting point of the theory. However, we will not consider the Wess-Zumino term here so that the physical model is not treated. Rather we are strongly motivated by \cite{Ro} and keep our intersts on noncommutative harmonic maps. \\

As many of classical elliptic PDEs arise from variational problem in Riemmanian geometry, and the field equations of physical theories, different non-commutative sigma models give rise to various noncommutative nonlinear elliptic PDEs which are the Euler-Lagrange equations for stationary points of the enegy functionals. In this paper we consider the sigma-models based on the noncommutative torus $\nc$, which is the univeral $C\sp*$-algebra of two unitaries $U$ and $V$ such that $UV=e^{2\pi i \theta}VU$ for $\theta \in \mathbb{R} \setminus \mathbb{Q}$ or its smooth dense subalgebra $\nc^{\infty}$(for more rudiments refer \cite{Co1, Rie2, Rie3}). In other words, we fix a source space or a world-sheet as the noncommutative torus for an irrational $\theta$ and for the various target spaces we observe the Euler-Lagrange equations. In particular, we try to investigate the case of Woronowicz's compact quantum matrix group $\SU_q(2,\mathbb{C})$. By a careful treatment of instantons of noncommutative Ising model we show that trivial harmonic unitaries are not global minima while they are strict local minima \cite{Ro}\cite{Li}. Related to a major issue of solving these equations I introduce a $\mathbb{Z}^2$-action on field maps to a noncommutative torus. At least I suspect that this method  could be a general prescription for generating a family of solutions from a known solution, thereby in the remaining part we demonstrates that it is true for all models known so far.

\section{Non-commutative $\sigma$-models on the non-commutative torus}
\subsection{An unified approach to non-commutative sigma model}
In this section, following \cite{MaRo} we reivew two dimensional non-linear $\sigma$-models in the framework of non-commutative geometry with emphasis on models defined on the non-commutative torus.
Recall that the spectral triple $(\mathcal{A}, \mathcal{H}, D)$ consists of an involutive algebra $\mathcal{A}$ represented as bounded operators on a Hilbert space $\mathcal{H}$ and a self-adjoint operator $D$ with compact resolvent such that the commutators $[D, a]$ are bounded for all $a\in \mathcal{A}$. A spectral triple  $(\mathcal{A}, \mathcal{H}, D)$ is said to be  \emph{even} if the Hilbert space $\mathcal{H}$ is endowed with a super-grading $\gamma$ which commutes with all $a\in \mathcal{A}$ and anti-commutes with $D$. In addition, we assume that $(\mathcal{A}, \mathcal{H}, D)$ is $(2,\infty)$-summable, which means roughly $\Tr_{\omega}(a|D|^{-2})<\infty$, where $\Tr_{\omega}$ is the Dixmier trace. Let $\Tr$ be the Dixmier trace composed with $D^{-2}$. Note that 
\[\psi_2(a_0,a_1,a_2)=\Tr((1+\gamma)a_0[D,a_1][D,a_2])\] defines a positive Hochschild 2-cocycle on $\mathcal{A}$, where $\gamma=\begin{pmatrix} 1& 0 \\ 0 & -1\end{pmatrix}$ is the grading on $\mathcal{H}$. The positivity of $\psi_2$ means that $<a_0\otimes a_1,b_0 \otimes b_1 >=\psi_2(b_0^*a_0,a_1,b_1^*) $ defines a positive sesquelinear form on $\mathcal{A}\otimes \mathcal{A}$.  Then a (noncommutative) sigma-model consists of homomorphisms $\phi: \mathcal{B} \to \mathcal{A}$ with the target the given even $(2,\infty)$-spectral triple $(\mathcal{A}, \mathcal{H}, D)$, and a positive element $G \in \Omega^2(\mathcal{B})$ in the space of universal $2$-forms on $\mathcal{B}$.  The enegy functional in the $\sigma$-model is given by 
\[\mathcal{L}_{G, D}(\phi)=\phi^*(\psi_2)(G)\ge 0.\] 
Since $\mathcal{A}$ represents a ``string world-sheet'', it seems natural to consider the non-commutative torus as a source space. Then $\mathcal{B}$ corrsponds to a (non-commutative or commutative) space-time, and we give several models corrsponding to different space-times with the (fixed) source space $\mathcal{A}=\nc$.  The goal of non-commutative $\sigma$-model is to find the critical points of the functional $\mathcal{L}_{G, D}$ and we call such critical points harmonic maps \cite{DKL2, MaRo}. Naturally using variational method each model gives rise to the Euler-Lagrange equation(s). 

Let us choose and fix a spectral triple $(\nc^\infty, \mathcal{H}, D)$ given by  $\mathcal{H}=L^2(\nc, \tau)\otimes \mathbb{C}^2$ , which is the tensor product of the GNS representation space coming from the trace $\tau$ of $\nc$ and the $2$-dimensional representation space of spinors, and  $D=\sigma_1\delta_1+\sigma_2\delta_2$, where 
\[\sigma_1=\begin{pmatrix} 0 & 1 \\ 
1 & 0 \end{pmatrix}, \quad \sigma_2=\begin{pmatrix} 0 & -i \\ 
i & 0 \end{pmatrix}.\] Here $\delta_1$ and $\delta_2$ are the infinitesimal generators coming from the dynamical system $(\mathbb{T}^2,  \nc)$,  which are defined on $\nc^{\infty}$ by the formulars
\[\delta_1(U)=2\pi i U, \quad \delta_1(V)=0,\quad  \delta_2(V)=2 \pi i V,\quad  \delta_2(U)=0.\]

1. Noncommutative Ising Model \\
The first example of noncommutative $\sigma$-model is the model with  the two points target space or $\mathbb{C}^2$ introduced by Dabrowski, Krajewski, Landi \cite{DKL}.  This is the simplest model we can think of, and any field map  $\phi$ from the algebra $\mathcal{B}=\mathbb{C}^2$ of functions over a two points space to $\mathcal{A}$ is determined by a projection in $\nc$ which is the image of  the characteristic function $e$ on one point under $\phi$.  Let us denote by $p$ the field map in this case. We choose a metric $G=dede \in \Omega_2(\mathcal{B})$. Then the action functional  is given by 
\[\begin{split}
 \mathcal{L}_{G,D}(\phi)&=S_D(p)=\Tr((1+\gamma)[D, p][D, p])\\
  &= \Tr(\delta_1(p)\delta_1(p)+\delta_2(p)\delta_2(p))\\
&=\tau(\delta_1(p)\delta_1(p)+\delta_2(p)\delta_2(p)).
\end{split}
\]
The Euler-Lagrange equation is 
\begin{equation}\label{E:projection}
p(\Delta p)-(\Delta p)p=0,
\end{equation} where $\Delta=\delta^2_1+\delta^2_2$ is the Laplacian. Note that it is a non-linear equation of second order and  difficult to solve directly.   For our purpose let us further review the instanton solution carrying a 'topological charge'\cite{DKL}. 

In general, given a projection $p$ in a connected componet of the space $P_{\theta}$ of all projections of $\nc$,  a topological charge or the first Chern number $c_1(p)$ is defined by \cite{Co1}
\[\frac{1}{2\pi i} \tau(p[\delta_1(p)\delta_2(p)-\delta_2(p)\delta_1(p)]). \]
It is easy to obtain the inequality 
\begin{equation} \label{E:enery>chern}
S_D(p)\ge - 2\pi c_1(p).
\end{equation} The equality in  (\ref{E:enery>chern}) happens when the so-called self duality equation
$\displaystyle \overline{\partial}(p) p=0$ is satisfied where $\overline{\partial}=1/2 (\delta_1+i \delta_2)$. 
A way of solving the self-duality equation was suggested in \cite{DKL}, here we want to gve a more systematic treatment with Theorem \ref{T: estimate} in mind . 

Let $\alpha$ be any non-zero real number, and $p$ and $q$ be a pair of integers which generates $\mathbb{Z}$, such that $q\alpha+p \neq 0$ and $q\neq 0$. Fix $\displaystyle \eta=\frac{1}{q\alpha+p}$. If we suppose $\beta=(a\alpha+b)\eta$ for any integers $a, b$ such that $ap-bq=\pm 1$, then  $A=M_d(A_{\beta})$ and $B=A_{\alpha}$ are strongly Morita equivalent via the module the completion of $C_c(\mathbb{R}\times\mathbb{Z}_{dq})$ under either $\underset{A}{<}, \, >$ or $<, \,\underset {B}{>}$(see \cite[Theorem 1.1]{Rie1}). The module is denoted by $V_{\alpha}(p,q; d)$ and is finitely generated and projective as a right $A_{\alpha}$-module, and that its full endomophism ring is $M_{d}(A_{\beta})$. We are particularly interested in  the case $d=1, q=1,p=0, a=0, b=-1$. Then the module $V(0,1;1)$,  which was used in \cite{Rie2}, is just the completion of $C_c(\mathbb{R})$ and if $\beta=\theta$, then $\alpha=-\frac{1}{\theta}$. Thus we obtain the Morita equivalence of $A=\nc$, and  $B=A_{-\frac{1}{\theta}}$.  Actually we can obtain the smooth version of this relation by replacing $A$ and $B$ by the smooth dense subalgebras $A^{\infty}$ and $B^{\infty}$ via  the module $ \Xi=\mathcal{S}(\mathbb{R})$. Since we only need the smooth version, we shall omit $\infty$ in the notation. Recall that the isomorphism between $A$ and $\End_B(\Xi)$ is implemented by viewing $a \in A$ as the left multiplication on $\Xi$.   Now if $\xi \in \Xi$ such that $<\xi, \xi \underset{B}{>}$ is invertible, then $x_1=\xi <\xi, \xi \underset{B}{>}^{-1},y_1=\xi$ satisfies $<x_1, y_1\underset{B}{>} =1$. Thus if we consider an endomorphism $\xi <\xi, \xi \underset{B}{>}^{-1}<\xi, \cdot \underset{B}{>} $, then it corrsponds to a projection  $p=\underset{A} {<} x_1, y_1>.$ Using this relation, we can show that $\displaystyle \overline{\partial}(p) p=0$ if and only if $\overline{\nabla}\xi=\xi \cdot b$ for some $b \in B$. Here $\overline{\nabla}=1/2(\nabla_1+\nabla_2)$ where $\nabla_i$'s define a connection on $\Xi$ \cite{CoRie, Rie3}. By taking $b=\lambda \in \mathbb{C}$, this amounts to solve the simple differential equation \[ \frac{d\xi}{dt}+(2\pi \theta t+2i \lambda)\xi=0.\]
The solutions $\xi_{\lambda}=Ce^{-\pi \theta t^2-2i\lambda t}$ are gaussians ($C$ is a constant), and for  $\theta$ small enough $<\xi_{\lambda}, \xi_{\lambda} >$ are invertible. Then we call the projections coming from such gaussians ``instanton''solutions. \\

2. Noncommutative  principal chiral model

Now let us consider the target space the circle $\mathbb{T}$ or $B=C(\mathbb{T})$ \cite{DKL, Ro}. Then the field map or homomorphism $\phi$ from $C(\mathbb{T})$ to $\nc$ is determined by the image of the generator $u$ viewing $C(\mathbb{T})=C^*(u)$.  Let $\phi(u)=W$. If we choose a positive element $du^*du \in \Omega^2(B)$, then the action functional is 
\[\begin{split}
 \mathcal{L}_{G, D}(\phi)&=2E(W)=\Tr((1+\gamma)[D,\phi(u^*)][D,\phi(u)])\\
&=\Tr((\delta_1(W))^*\delta_1(W)+(\delta_2(W))^*\delta_2(W))\\
&=\tau((\delta_1(W))^*\delta_1(W)+(\delta_2(W))^*\delta_2(W)),
\end{split}\]
and the Euler-Lagrange equation is 
\begin{equation}\label{E:harmonic}
\sum_{j=1}^2 \delta_j((\phi(u))^*\delta_j(\phi(u))=W^*(\Delta W)+(\delta_1(W))^*\delta_1(W)+(\delta_2(W))^*\delta_2(W)=0.
\end{equation}
A unitary  is  harmonic if it satisfies (\ref{E:harmonic}).  The scalar multiples of $U^mV^n$ are harmonic and is called trivial solutions. Though there are  solutions of other type (see Theorem \ref{T:symmetricharmonic} below), we would like to know if there is another way to construct a solution.\\

3. Noncommutative space-time; noncommutative torus 

Although the above two models are already non-trivial, the target spaces are commutative. The sigma-model with a noncommutative space-time appeared in \cite{MaRo} for the first time. In particular, Mathai and Rosenberg considered the noncommutative torus as the noncommutative space-time and they classified the homomorphisms between two different noncommutative tori. Here we only deal with the case that the field map is an endomorphsim on a fixed noncommutative torus $\nc$. In this case, the action functional is similar to the noncommutative chiral model since $B=\nc$ is generated by two unitaries $U, V$. By taking a positive element $dU^*dU+dV^*dV \in \Omega^2(B)$, the action functional $\mathcal{L}_{G, D}(\phi)$ for a unital $*$-endomorphism $\phi$ of $\nc$ is given by 
\[\begin{split}
\mathcal{L}_{G,D}&=\Tr((1+\gamma)[D,\phi(U^*)][D,\phi(U)])+ \Tr((1+\gamma)[D,\phi(
V^*)][D,\phi(V)])\\
&=\sum_j \Tr((\delta_i(\phi(U))^*\delta_i(\phi(U)))+\sum_j \Tr (\delta_j(\phi(V))^*\delta_j(\phi(V))).  \end{split}
\]
Then the Euler-Lagrnage equations for $\phi$ to be a harmonic map are 
\begin{equation}\label{E:harmonicmap}
0= \sum_{j=1}^2\{ \Tr(A \delta_j[\phi(U)^*\delta_j(\phi(V))]) +\Tr(B\delta_j[\phi(V)^*\delta_j(\phi(V))])\} 
\end{equation}
where $A, B$ are self-adjoint elements in $\nc$ constrained to satisfy the equation
\[ A-\phi(V)^*A\phi(V)=B-\phi(U)^*B\phi(U).\]

The interesting feature of the equation (\ref{E:harmonicmap}) is the appearance of the ``integral'' (compare with (\ref{E:projection}) and (\ref{E:harmonic}).)  Moreover, the constraint equation makes it more difficult to solve the Euler-Lagrange equations. It is known that the $*$-automorphism $\phi_A$ defined by $\phi_A(U)=U^pV^q$ and $\phi_A(V)=U^rV^s$, with $A=\begin{pmatrix} p & q \\ r & s \end{pmatrix} \in \SL(2, \mathbb{Z})$ is a harmonic map. 

4. A quantum group for the target space

Motivated by Mathai-Rosenberg model we try another noncommutative space-time so called Woronowicz's compact quantum group $\SU_{q}(2,\mathbb{C})$,  which is the universal $C\sp*$-algebra generated by $\a, \g$ subject to the condition that the following matrix is unitary;
\[
\begin{pmatrix}
\a & -q\g^* \\
\g & \a^*
\end{pmatrix}.
\]
Hence $\a$ and $\g$ satisfy
\begin{equation} \label{E:quantum}
 \begin{gathered}
\a^*\a+\g^*\g=1, \quad \a\a^*+q^2\g\g^*=1, \\
\g^*\g=\g\g^*,\quad  \a\g=q\g\a,\quad  \a\g^*=q\g^*\a.
\end{gathered}
\end{equation}
Usually, $q$ is a continuous parameter whose range is $[-1,1]$. Then we consider morphisms from $B=\SU_{q}(2,\mathbb{C})$ to $A=\nc$. Unfortunately it is observed that any morphism from $\SU_{q}(2,\mathbb{C})$ to $\nc$ boils down to the case $q=1$ because of the second and fourth conditions in (\ref{E:quantum}). Hence $\SU_1(2,\mathbb{C})$ is the commutative $C\sp*$-algebra $C(\SU(2,\mathbb{C}))$ which is generated by two coordinate functions $\a: \begin{pmatrix}
a & -\bar{b} \\
b & \bar{a}
\end{pmatrix} \in \SU(2,\mathbb{C}) \to a, \g: \begin{pmatrix}
a & -\bar{b} \\
b & \bar{a}
\end{pmatrix} \in \SU(2,\mathbb{C})\to b$. 

Although any morphism $\phi$ is determined by its images of $\a, \g$, we rather restrict ourselves to the smaller class of morphisms, so called the coercive maps which sends each of $\a$ and $\g$ to a scalar multiple of a unitary in $\nc$. Thus if $\phi(\a)=\mu u$, $\phi(\g)=\nu v$ for $\mu, \nu \in \mathbb{C}\setminus \{0\}$ and $u, v \in \nc$, then  we must have $|\mu|^2+|\nu|^2=1$. We note that the set of coersive maps is not empty (see Theorem \ref{T:coercivemap} below).
To define the energy functional, we take $G=d\a^*d\a+d\g^*d\g \in \Omega^2(B)$. Then we have

 \begin{equation}\label{E:SU(2)}
\begin{split}
 \mathcal{L}_{G,D}(\phi)&=\psi_2(1,\phi(\a^*), \phi(\a))+\psi_2(1,\phi(\g^*),\phi(\g)) \\
&=\sum_{j=1}^2 \Tr(\delta_j(\phi(\a^*))\delta_j(\phi(\a)))+\sum_{j=1}^2 \Tr(\delta_j(\phi(\g^*))\delta_j(\phi(\g))).
\end{split}
\end{equation} We determine the Euler-Lagrange equation for the energy functional of coercive maps $ \phi: \SU(2,\mathbb{C}) \to \nc$. We note that even though we take a commutative space-time one can observe the ``integral'' $\Tr$ in the Euler-Lagrange equation while Mathai and Rosenberg have thought that the appearance of $\Tr$ in the Euler-Lagrange equation might come from a non-commutative space-time.    
\begin{proposition}\label{P:SU(2)}
Let $\mathcal{L}_{G,D}$ denote the energy functional given by (\ref{E:SU(2)}) for coercive maps $\phi$ from $\SU(2,\mathbb{C})$ to $\nc$. Then the Euler-Lagrange equations for $\phi$ to be a harmonic map are 
\[0=\sum_{j=1}^2 \{ \Tr(A\delta_j[\phi(\a)^*\delta_j(\phi(\a))])\}+\sum_{j=1}^2 \{ \Tr(B\delta_j[\phi(\g)^*\delta_j(\phi(\g))])\}\]
where $A, B$ are self-adjoint elements in $\nc$ constrained to satisfy the equations
\begin{equation} \label{E:constraints}
 \begin{gathered}
   \phi(\a)A\phi(\g)-\phi(\g)\phi(\a)A=\phi(\g)B\phi(\a)-\phi(\a)\phi(\g)B, \\
 \phi(\a)A\phi(\g^*)-\phi(\g^*)\phi(\a)A=\phi(\a)B\phi(\g^*)-B\phi(\g^*)\phi(\a).
\end{gathered}
\end{equation}
\end{proposition}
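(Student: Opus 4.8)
The plan is to run the variational computation exactly as for the chiral model (\ref{E:harmonic}) and the Mathai--Rosenberg model (\ref{E:harmonicmap}), while keeping track of the extra algebraic identities that a coercive homomorphism must preserve. Write $X=\phi(\a)=\mu u$ and $Y=\phi(\g)=\nu v$ with $u,v$ unitary and $|\mu|^2+|\nu|^2=1$. Since each $\delta_j$ is a $*$-derivation and $u,v$ are unitary, $\delta_j(\phi(\a^{*}))\delta_j(\phi(\a))=|\mu|^2(\delta_j u)^{*}(\delta_j u)$ and similarly for $\g$, so (\ref{E:SU(2)}) reduces to the weighted energy $\mathcal L_{G,D}(\phi)=|\mu|^2\sum_j\tau((\delta_j u)^{*}\delta_j u)+|\nu|^2\sum_j\tau((\delta_j v)^{*}\delta_j v)$. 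Throughout I hold the normalisation $(\mu,\nu)$ fixed; a variation of $(\mu,\nu)$ alone contributes only the scalar identity $\sum_j\tau((\delta_j u)^{*}\delta_j u)=\sum_j\tau((\delta_j v)^{*}\delta_j v)$, which I record separately since it does not enter the operator equations.

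For the first variation the admissible unitary directions are $\dot u=iuA$ and $\dot v=ivB$ with $A=A^{*}$ and $B=B^{*}$, so that $\dot X=iXA$, $\dot Y=iYB$ and $\dot{Y^{*}}=-iBY^{*}$. Differentiating the reduced energy and integrating by parts, using $\tau\circ\delta_j=0$, $(\delta_j a)^{*}=\delta_j(a^{*})$ and $(\delta_j u^{*})\,u=-u^{*}(\delta_j u)$ (the non-divergence contributions cancelling), one arrives at
\[\frac{d}{dt}\Big|_{0}\mathcal L_{G,D}(\phi_t)=2i\Big[\tau\Big(A\sum_j\delta_j\big(\phi(\a)^{*}\delta_j\phi(\a)\big)\Big)+\tau\Big(B\sum_j\delta_j\big(\phi(\g)^{*}\delta_j\phi(\g)\big)\Big)\Big].\]
A short calculation shows that the bracketed quantity is purely imaginary when $A,B$ are self-adjoint and $u,v$ unitary, in keeping with $\mathcal L_{G,D}$ being real-valued; hence $\phi$ is a harmonic map precisely when the identity displayed in the statement holds for every admissible pair $(A,B)$.

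It remains to determine the admissible pairs. Because every $\phi_t$ is a homomorphism of $\SU_1(2,\mathbb C)=C(\SU(2,\mathbb C))$, the variation must preserve all the relations (\ref{E:quantum}) with $q=1$. For a coercive map the sphere relations $\a^{*}\a+\g^{*}\g=1$, $\a\a^{*}+\g\g^{*}=1$ collapse to the fixed scalar $|\mu|^2+|\nu|^2=1$, and $\g^{*}\g=\g\g^{*}$ together with the unitarity of $u,v$ are automatic along the chosen variation; this is exactly where coerciveness enters, leaving only the two constraints coming from $\a\g=\g\a$ and $\a\g^{*}=\g^{*}\a$. Linearising $\phi_t(\a)\phi_t(\g)=\phi_t(\g)\phi_t(\a)$ with $\dot X=iXA$, $\dot Y=iYB$ and re-ordering by means of the relations $\phi(\a)\phi(\g)=\phi(\g)\phi(\a)$, $\phi(\a)\phi(\g^{*})=\phi(\g^{*})\phi(\a)$ gives $\phi(\a)[A,\phi(\g)]=\phi(\g)[B,\phi(\a)]$; the same procedure for $\phi_t(\a)\phi_t(\g^{*})=\phi_t(\g^{*})\phi_t(\a)$, using $\dot{Y^{*}}=-iBY^{*}$, gives $\phi(\a)[A,\phi(\g^{*})]=[\phi(\a),B]\phi(\g^{*})$. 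Expanding the commutators and moving factors across once more with the two commutation relations rewrites these precisely as (\ref{E:constraints}), completing the proof.

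The most delicate points are the bookkeeping in the last two paragraphs: performing the integration by parts with the right signs and verifying that the bracket in the displayed formula is purely imaginary — so that vanishing of the (real) first variation is equivalent to the full $\Tr$-equation of the statement rather than to one real component of it — and checking that (\ref{E:constraints}) exhausts the linearised constraints, i.e. that coerciveness genuinely removes the sphere relations and leaves exactly two operator constraints on the self-adjoint directions $A$ and $B$. Everything else is routine manipulation with the derivations $\delta_j$ and the trace $\tau$.
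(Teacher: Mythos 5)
Your proposal is correct and follows essentially the same route as the paper: vary $\phi$ by right multiplication with unitaries $e^{itA}$, $e^{itB}$ ($A,B$ self-adjoint), reduce the energy variation to the Mathai--Rosenberg computation to get the displayed $\Tr$-equation, observe that the sphere and normality relations in (\ref{E:quantum}) are automatically preserved for coercive maps, and obtain (\ref{E:constraints}) by linearising the two commutation relations $\a\g=\g\a$ and $\a\g^*=\g^*\a$. The only difference is that you carry out the integration-by-parts and purely-imaginary checks explicitly where the paper simply cites \cite[Proposition 3.9]{MaRo}; your commutator forms of the constraints agree with (\ref{E:constraints}) after using $\phi(\a)\phi(\g)=\phi(\g)\phi(\a)$.
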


\begin{proof}

For a fixed pair $(\mu,\nu)$, if we consider  an 1-parameter  family of $\phi$, then it is of the form $\phi_t(\a)=e^{i h(t)}\phi(\a)$ and $\phi_t(\g)=e^{ i  g(t)}\phi(\g)$  where h(t), g(t) are 1-parameter families of self-adjoint elements with $h(0)=g(0)=0$.  Using $\phi_t(\a)=\phi(\a)(1+ith'(0)+O(t^2)), \phi_t(\g)=\phi(\g)(1+itg'(0)+O(t^2))$, we can deduce the Euler-Lagrange equation using the same proof of \cite[Proposition 3.9]{MaRo}. The diffences come from the constraints given by (\ref{E:quantum}). It is easy to check the first condition $\phi_t(\a^*)\phi_t(\a)+\phi_t(\g^*)\phi_t(\g)=I$ is automatically satisfied. Similarly the second and third conditions holds. But   
\[ \begin{split}
0&=\frac{d}{dt}\Big|_{t=0}\phi_t(\a\g)-\phi_t(\g\a)\\
&=\lim_{t \to 0} \frac{(\phi(\a)+it\phi(\a)h'(0)+O(t^2))(\phi(\g)+it\phi(\g)g'(0)+O(t^2))-\phi(\a)\phi(\g)}{t}\\
&-\lim_{t \to 0}\frac{(\phi(\g)+it\phi(\g)g'(0)+O(t^2))(\phi(\a)+it\phi(\a)h'(0)+O(t^2))-\phi(\g)\phi(\a)}{t}\\
&=\lim_{t \to 0}\frac{it\phi(\a)h'(0)\phi(\g)+it\phi(\a)\phi(\g)g'(0)-it\phi(\g)g'(0)\phi(\a)-it\phi(\g)\phi(\a)h'(0)}{t}\\
&=i[\phi(\a)h'(0)\phi(\g)+\phi(\a)\phi(\g)g'(0)-\phi(\g)g'(0)\phi(\a)-\phi(\g)\phi(\a)h'(0)]
\end{split}
\]
Similarly, from $\displaystyle 0=\frac{d}{dt}\Big|_{t=0}\phi_t(\a\g^*)-\phi_t(\g^*\a)$ we can deduce that 
\[\phi(\a)h'(0)\phi(\g^*)-\phi(\g^*)\phi(\a)h'(0)+ g'(0)\phi(\g^*)\phi(\a)-\phi(\a)g'(0)\phi(\g^*)=0.\]
\end{proof}
As we promised, we show that there are infinitely many  coercive maps $\phi$ which are  solutions of the Euler-Lagrange equation.
\begin{theorem}\label{T:coercivemap}
Let $\phi_A$ be a map defined by $\phi_A(\a)=\frac{1}{\sqrt{2}}U^pV^q$ and $\phi_A(\g)=\frac{1}{\sqrt{2}}U^rV^s$, with $A=\begin{pmatrix} p & q \\ r & s \end{pmatrix}  \notin \GL(2,\mathbb{Z})$. Then $\phi_A$ is a critical point of $\mathcal{L}_{G,D}$ given by (\ref{E:SU(2)}).
\end{theorem}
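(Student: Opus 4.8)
The plan is to first check that $\phi_A$ is a legitimate object of the theory --- a coercive $*$-homomorphism $\SU(2,\mathbb{C})\to\nc$ --- and then to verify the Euler--Lagrange equations of Proposition \ref{P:SU(2)} by a short computation that becomes essentially trivial once the right observation is isolated. For the well-definedness I would impose the relations (\ref{E:quantum}) with $q=1$ on the candidate images $\phi_A(\a)=\tfrac{1}{\sqrt2}U^pV^q$, $\phi_A(\g)=\tfrac{1}{\sqrt2}U^rV^s$. Since $U^pV^q$ and $U^rV^s$ are unitary, the two sphere relations both reduce to $\tfrac12+\tfrac12=1$, the relation $\g^*\g=\g\g^*$ is automatic (both sides equal $\tfrac12$), and the coercivity condition $|\mu|^2+|\nu|^2=1$ holds with $\mu=\nu=\tfrac1{\sqrt2}$. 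The only genuine constraint comes from $\a\g=\g\a$ and $\a\g^*=\g^*\a$: a commutation computation using $UV=e^{2\pi i\theta}VU$ shows these hold if and only if $\theta(ps-qr)\in\mathbb{Z}$, which, as $\theta\notin\mathbb{Q}$, forces $\det A=ps-qr=0$. Thus the hypothesis must be read together with $\det A=0$ (which indeed places $A$ outside $\GL(2,\mathbb{Z})$): the rows $(p,q)$ and $(r,s)$ are proportional, and under this assumption $\phi_A$ extends to a coercive map by the universal property of $C(\SU(2,\mathbb{C}))$.

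For the criticality, the key point is that $\phi_A(\a)^*\delta_j(\phi_A(\a))$ and $\phi_A(\g)^*\delta_j(\phi_A(\g))$ are scalars. Indeed $\delta_1(U^mV^n)=2\pi i\,m\,U^mV^n$ and $\delta_2(U^mV^n)=2\pi i\,n\,U^mV^n$, so $\phi_A(\a)^*\delta_1(\phi_A(\a))=\tfrac12(U^pV^q)^*(2\pi i p)(U^pV^q)=\pi i p\cdot 1$, and likewise $\phi_A(\a)^*\delta_2(\phi_A(\a))=\pi i q\cdot 1$, $\phi_A(\g)^*\delta_1(\phi_A(\g))=\pi i r\cdot 1$, $\phi_A(\g)^*\delta_2(\phi_A(\g))=\pi i s\cdot 1$. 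Since $\delta_j$ annihilates scalar multiples of the identity, $\delta_j[\phi_A(\a)^*\delta_j(\phi_A(\a))]=0$ and $\delta_j[\phi_A(\g)^*\delta_j(\phi_A(\g))]=0$ for $j=1,2$.

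With this in hand the conclusion is immediate. Substituting into the Euler--Lagrange equation of Proposition \ref{P:SU(2)}, for every pair of self-adjoint elements $A,B\in\nc$ --- a fortiori for every pair satisfying the constraints (\ref{E:constraints}), a set which is nonempty since $A=B=0$ satisfies them --- the right-hand side equals $\sum_j\Tr(A\cdot 0)+\sum_j\Tr(B\cdot 0)=0$. Hence $\phi_A$ is a critical point of $\mathcal{L}_{G,D}$. If one prefers to argue directly rather than quote Proposition \ref{P:SU(2)}, one computes $\tfrac{d}{dt}|_{t=0}\mathcal{L}_{G,D}(\phi_t)$ along an arbitrary admissible variation $\phi_t(\a)=e^{ih(t)}\phi_A(\a)$, $\phi_t(\g)=e^{ig(t)}\phi_A(\g)$ with $h(0)=g(0)=0$; after integrating by parts via $\Tr(\delta_j(x)y)=-\Tr(x\delta_j(y))$, every term of the first variation carries a factor $\delta_j[\phi_A(\a)^*\delta_j(\phi_A(\a))]$ or $\delta_j[\phi_A(\g)^*\delta_j(\phi_A(\g))]$ and so vanishes.

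I expect the main obstacle to be the well-definedness step rather than the variational one: one must notice that the $\SU(2)$ commutation relations secretly force $\det A=0$, so that the condition ``$A\notin\GL(2,\mathbb{Z})$'' in the statement has to be accompanied by this stronger requirement (and one should also be careful to set up the first-variation computation consistently with the conventions of the proof of Proposition \ref{P:SU(2)}). Once $\phi_A$ is known to be coercive, criticality is a one-line consequence of the fact that $\phi_A$ sends $\a$ and $\g$ to unitaries whose elements $\phi_A(\cdot)^*\delta_j\phi_A(\cdot)$ are constant --- the same mechanism that renders the trivial unitaries $U^mV^n$ harmonic in the chiral model (\ref{E:harmonic}).
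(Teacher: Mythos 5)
Your proposal is correct and follows essentially the same route as the paper: verify that the images commute (which forces $\det A = ps-qr = 0$ for irrational $\theta$), observe that $\phi_A(\a)^*\delta_j(\phi_A(\a))$ and $\phi_A(\g)^*\delta_j(\phi_A(\g))$ are scalars, hence are annihilated by $\delta_j$, so the Euler--Lagrange equations of Proposition \ref{P:SU(2)} hold for all admissible $A,B$. You are also right that the stated hypothesis $A\notin\GL(2,\mathbb{Z})$ is weaker than what the argument actually uses; the paper's own proof silently assumes $ps-rq=0$, exactly as you point out.
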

\begin{proof}
 
\[ \begin{split}
\phi_A(\a)\phi_A(\g)&=\frac{1}{\sqrt{2}}U^pV^q \frac{1}{\sqrt{2}}U^rV^s=\frac{1}{2}e^{-2\pi i\theta
 qr}U^{p+r}V^{q+s}\\
\phi_A(\g)\phi_A(\a)&=\frac{1}{\sqrt{2}}U^rV^s \frac{1}{\sqrt{2}}U^pV^q=\frac{1}{2}e^{-2\pi i\theta
 ps}U^{p+r}V^{q+s}
\end{split}\]
Since $ps-rq=0$, $e^{2 \pi i \theta ps}=e^{2 \pi i \theta qr}$ and $\phi_A(\a)\phi_A(\g)=\phi_A(\g)\phi_A(\a)$. 
Similarly, we can check that $\phi_A(\a)\phi_A(\g^*)=\phi_A(\g^*)\phi_A(\a)$. Thus $\phi_A$ is a homomophism from $\SU(2,\mathbb{C})$ to $\nc$ and is a coercive map. In addition,
\[\begin{gathered}
\phi_A(\a^*)\delta_1(\phi_A(\a))= \pi i p \\
\delta_2(\phi_A(\a^*))\phi_A(\a)= \pi i q \\
\delta_1(\phi_A(\g^*))\phi_A(\g)= \pi i r \\
\delta_2(\phi_A(\g^*))\phi_A(\g)= \pi i p
\end{gathered}\] 
Applying any derivation $\delta_j$, $j=1,2$, to any of the terms above gives zero, since they are all constants. Thus for any $A,B$ satisfying the constraints (\ref{E:constraints}) the Euler-Lagrange equation is satisfied. 
\end{proof}
Mathai and Rosenberg raised a question whether there are other critical points aside from trivial solutions in \cite{MaRo} and the same question applies to our situation. Are there solutions other than $\phi_A$? 
 
\subsection{A construction of harmonic unitaries}
The following theorem is not new, it is mentioned in the paper \cite{DKL} shortly without proof. Here we give a full proof.   
\begin{theorem}\label{T:symmetricharmonic}
Let $W=1-2p$, where $p$ satisfies $(\Delta p)p-p(\Delta p)=0$. Then $W$ is a harmonic unitary.
\end{theorem}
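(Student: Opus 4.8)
The plan is to check the defining equation $(\ref{E:harmonic})$ for $W$ directly, taking advantage of the fact that $W=1-2p$ is a self-adjoint unitary (a \emph{symmetry}): here $p$ is a projection, as in $(\ref{E:projection})$, so $W^{*}=W$ because $p=p^{*}$, and $W^{2}=(1-2p)^{2}=1-4p+4p^{2}=1$ because $p^{2}=p$. First I would substitute $\delta_{j}(W)=-2\delta_{j}(p)$ and $\Delta W=-2\Delta p$ into the left-hand side of $(\ref{E:harmonic})$. Since each $\delta_{j}$ generates a one-parameter group of $*$-automorphisms of $\nc$, it commutes with the involution, so $\delta_{j}(p)$ is again self-adjoint and $(\delta_{j}(W))^{*}\delta_{j}(W)=4(\delta_{j}(p))^{2}$. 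Thus the left-hand side of $(\ref{E:harmonic})$ reduces to
\[
W^{*}(\Delta W)+\sum_{j=1}^{2}(\delta_{j}(W))^{*}\delta_{j}(W)= -2(1-2p)\Delta p+4\sum_{j=1}^{2}(\delta_{j}(p))^{2},
\]
and it remains to show that this expression vanishes.

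The key input is the idempotent relation $p^{2}=p$, which I would differentiate with the Leibniz rule. Applying $\delta_{j}$ once gives $\delta_{j}(p)\,p+p\,\delta_{j}(p)=\delta_{j}(p)$; applying $\delta_{j}$ a second time and summing over $j=1,2$ yields $(\Delta p)\,p+p\,(\Delta p)+2\sum_{j}(\delta_{j}(p))^{2}=\Delta p$, that is,
\[
\sum_{j=1}^{2}(\delta_{j}(p))^{2}=\tfrac{1}{2}\bigl(\Delta p-(\Delta p)\,p-p\,(\Delta p)\bigr).
\]
Substituting this back into the displayed expression, the terms proportional to $\Delta p$ alone cancel and one is left with $2\bigl(p\,(\Delta p)-(\Delta p)\,p\bigr)$. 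By hypothesis $(\Delta p)p-p(\Delta p)=0$, which is exactly equation $(\ref{E:projection})$, so the left-hand side of $(\ref{E:harmonic})$ is zero; combined with the observation that $W$ is a unitary, this proves that $W$ is a harmonic unitary.

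I do not anticipate a real obstacle: once the hypothesis on $p$ is available, the argument is purely algebraic. The two points that deserve care are the compatibility of the derivations $\delta_{1},\delta_{2}$ with the involution --- one must use $\delta_{j}(a^{*})=\delta_{j}(a)^{*}$, so that both the energy density $(\delta_{j}(W))^{*}\delta_{j}(W)$ and the cross term in the expansion of $\delta_{j}\bigl(W^{*}\delta_{j}(W)\bigr)$ behave as claimed --- and the bookkeeping of the second derivatives of $p^{2}=p$. It is worth recording that the same computation shows the left-hand side of $(\ref{E:harmonic})$ evaluated at $W=1-2p$ equals $2\bigl(p(\Delta p)-(\Delta p)p\bigr)$, so harmonicity of the symmetry $W$ is in fact \emph{equivalent} to $p$ being a solution of the noncommutative Ising Euler--Lagrange equation $(\ref{E:projection})$.
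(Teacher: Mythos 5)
Your proposal is correct and follows essentially the same route as the paper: substitute $\delta_j(W)=-2\delta_j(p)$, differentiate $p^2=p$ twice to express $\sum_j(\delta_j(p))^2$ in terms of $\Delta p$, and reduce the left-hand side of (\ref{E:harmonic}) to $2\bigl(p(\Delta p)-(\Delta p)p\bigr)$. Your bookkeeping of the factor $\tfrac{1}{2}$ in $\sum_j(\delta_j(p))^2=\tfrac{1}{2}\bigl(\Delta p-(\Delta p)p-p(\Delta p)\bigr)$ is in fact cleaner than the paper's intermediate lines (which contain a compensating factor-of-two slip), and your closing remark that harmonicity of $W=1-2p$ is equivalent to (\ref{E:projection}) is a worthwhile observation not made explicit in the paper.
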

\begin{proof}
Note that $\delta_i(W)=\delta_i(W^*)=-2\delta_i(p)$.
Thus \[
\delta_i(W^*)\delta_i(W)=4 \delta_i(p)\delta_i(p) \] and
\[
\delta^2_i(W^*)=\delta^2_i(W)=-2\delta^2_i(p).
\] 
Then
\[
\begin{split}
W^*(\Delta W)+\sum_{i=1}^2  \delta_i(W^*)\delta_i(W)=& 4 \sum_i \delta_i(p)\delta_i(p)-2(1-2p)(\Delta p)\\
=&2 [\sum_i \delta_i(p)\delta_i(p)-(1-2p)(\Delta p)]\\
=&2 \left[\Delta p -  p(\Delta p)-(\Delta p)p- (1-2p)(\Delta p)\right]\\
=& 2[p(\Delta p)-(\Delta p)p ]=0.
\end{split}
\] 
\end{proof}

\begin{theorem}\label{T: estimate}
Let $W$ be the above harmonic unitary. Then the energy $E(W)\ge 4\pi$. Moreover, we can actually obtain the equality.  
\end{theorem}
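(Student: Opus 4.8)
The plan is to reduce the energy of $W$ to the energy $S_D(p)$ of the underlying projection, apply the topological lower bound (\ref{E:enery>chern}), and then observe that this bound is at least $2\pi$ because the first Chern number of a non-trivial projection in $\nc$ cannot vanish when $\theta$ is irrational.

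First I would re-express $E(W)$ in terms of $p$. Since $W=1-2p$ is self-adjoint, $\delta_i(W)^*=\delta_i(W^*)=\delta_i(W)=-2\delta_i(p)$, so $\delta_i(W^*)\delta_i(W)=4\,\delta_i(p)^2$ and, using the chiral-model normalisation $\mathcal{L}_{G,D}(\phi)=2E(W)$,
\[
2E(W)=\tau\Big(\sum_{i=1}^2\delta_i(W^*)\delta_i(W)\Big)=4\sum_{i=1}^2\tau\big(\delta_i(p)^2\big)=4\,S_D(p),
\]
hence $E(W)=2\,S_D(p)$. Thus it suffices to prove $S_D(p)\ge 2\pi$, with equality attained for a suitable $p$ satisfying (\ref{E:projection}).

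Next, (\ref{E:enery>chern}) gives $S_D(p)\ge -2\pi c_1(p)$; applying the same inequality to $1-p$, and noting $S_D(1-p)=S_D(p)$ (as $\delta_i(1-p)=-\delta_i(p)$) and $c_1(1-p)=-c_1(p)$, we also get $S_D(p)\ge 2\pi c_1(p)$, so $S_D(p)\ge 2\pi|c_1(p)|$. I would then argue $|c_1(p)|\ge1$ for any non-trivial projection. Indeed, up to the normalisation in its definition, $c_1(p)$ is the pairing of $[p]\in K_0(\nc)$ with the fundamental cyclic $2$-cocycle, which by Connes' index theorem is an integer; moreover $\tau(p)\in(\mathbb{Z}+\mathbb{Z}\theta)\cap(0,1)$ and the coefficient of $\theta$ in $\tau(p)$ equals $\pm c_1(p)$. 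Since $\theta$ is irrational, $\tau(p)\in(0,1)$ forces that coefficient to be non-zero, so $|c_1(p)|\ge1$, whence $S_D(p)\ge 2\pi$ and $E(W)\ge4\pi$.

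For the equality I would take $p$ to be one of the instanton projections built above from the Gaussian $\xi_\lambda$, which satisfies the self-duality equation $\overline{\partial}(p)p=0$; then (\ref{E:enery>chern}) is an equality, $S_D(p)=-2\pi c_1(p)$, and since $p$ is a genuine instanton the topological charge $-c_1(p)$ is positive, so it remains only to check $c_1(p)=-1$, giving $S_D(p)=2\pi$ and $E(W)=4\pi$. This last point — computing the Chern number (equivalently the trace) of the Heisenberg-module projection $p=\underset{A}{<}x_1,y_1>$ coming from $\Xi=\mathcal{S}(\mathbb{R})$ for the equivalence $V(0,1;1)$ — is the main obstacle: it is done through the standard formula relating $\tau_A(\underset{A}{<}\xi,\xi>)$ to the $B$-valued inner product, which yields $\tau(p)=\theta$ (consistent with invertibility of $<\xi_\lambda,\xi_\lambda>$ for small $\theta$) and hence $|c_1(p)|=1$, the sign being pinned down by the use of $\overline{\partial}$ rather than $\partial$. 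One should also verify that the normalisation in the definition of $c_1(p)$ matches the integer pairing in Connes' index theorem, so that the integrality used in the lower bound is literally valid.
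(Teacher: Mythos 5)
Your proposal is correct and follows essentially the same route as the paper: reduce $E(W)$ to $4S_D(p)$ (up to the factor $2$ in $\mathcal{L}_{G,D}=2E$), invoke the bound $S_D(p)\ge -2\pi c_1(p)$ together with integrality of $c_1$, and realize equality with the Gaussian instanton, whose trace $\tau(p)=|\theta|$ is computed via the Morita-equivalence bimodule exactly as you describe. In fact you are more careful than the paper at the step $S_D(p)\ge 2\pi$: the symmetrization $p\mapsto 1-p$ giving $S_D(p)\ge 2\pi|c_1(p)|$, and the irrationality argument showing $|c_1(p)|\ge 1$ for a non-trivial projection, fill in details that the paper compresses into the single phrase ``by the integrality property of the first Chern number'' (which by itself would be vacuous if $c_1(p)=0$ or of the wrong sign).
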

\begin{proof}
Recall that the action functional $S_D(\phi)$ corresponding to two points target space is given by $\Tr(\delta_1(p)\delta_1(p)+\delta_2(p)\delta_2(p))$ where the field map $\phi$ is determined by the projection $p$ in $\nc$.\\
 Since the action functional $\mathcal{L}_D(\psi)$ for the field map $\psi: C(S^1) \to \nc$ is $\Tr(\delta_1(U^*)\delta_1(U)+\delta_2(U^*)\delta_2(U))$ where $\psi(z)=U$,  in case of $\psi(z)=U=1-2p$  we have
\[\mathcal{L}_D(\psi)=4S_D(\phi).\]
Moreover, 
\[S_D(\phi)\ge -2\pi c_1(p).\]
By the integrality property of the first Chern number, 
\[\mathcal{L}_D(\psi)\ge 8\pi.\]
Let us show that we can choose $p$ such that $\tau(p)=|\theta|$, then  it follows that $c_1(p)=-1$.\\
 Recall that the instanton solution is given by $p=_A<\xi<\xi,\xi>_B^{-1/2}, \xi<\xi,\xi>_B^{-1/2}>$ for the gaussians $\xi \in \Xi$. Then 

\[\begin{split}
\tau_B(_B<\xi<\xi,\xi>_B^{-1/2}, \xi<\xi,\xi>_B^{-1/2}>)&=|-\frac{1}{\theta}| \tau_A(_A<\xi<\xi,\xi>_B^{-1/2}, \xi<\xi,\xi>_B^{-1/2}>)\\
&=|-\frac{1}{\theta}| \tau(p)\\
&=1
\end{split}\]
So $\tau(p)=|\theta|$ and we are done.
\end{proof} 
Recall that Li \cite{Li} showed that the scalar multiples of $U^mV^n$ are minima in the connected component on the projective unitary group, which is the unitary group modulo $\mathbb{T}$. But in view of the above theorem these are not global minima.  
\begin{corollary}
The scalar multiples of $U^mV^n$ are not global minima for the energy functional $\mathcal{L}_D$ or $2E$.
\end{corollary}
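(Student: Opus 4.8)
The plan is to beat the non-trivial harmonic unitaries $U^mV^n$ with a single explicit competitor --- the harmonic unitary $W=1-2p$ furnished by Theorem~\ref{T: estimate} --- and thereby conclude that no $U^mV^n$ with $(m,n)\neq(0,0)$ can be a global minimum of $E$ (equivalently of $\mathcal{L}_D=2E$). Note first that the case $m=n=0$, $W=1$, genuinely is the global minimum since $E\geq0$; so the corollary is to be read as a statement about the non-trivial $U^mV^n$, each of which is only a \emph{local} minimum by \cite{Li}.

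First I would record the energy of the trivial solutions. Since $\delta_1$ is a derivation with $\delta_1(U)=2\pi iU$, $\delta_1(V)=0$, and symmetrically for $\delta_2$, one has $\delta_1(U^mV^n)=2\pi im\,U^mV^n$ and $\delta_2(U^mV^n)=2\pi in\,U^mV^n$; as $U^mV^n$ is unitary this gives $(\delta_1(U^mV^n))^*\delta_1(U^mV^n)=(2\pi m)^2$ and likewise $(2\pi n)^2$ for $\delta_2$. Using $\tau(1)=1$,
\[
2E(U^mV^n)=\mathcal{L}_D(U^mV^n)=\tau\big((2\pi m)^2+(2\pi n)^2\big)=4\pi^2(m^2+n^2),
\]
so $E(U^mV^n)=2\pi^2(m^2+n^2)$, and in particular $E(U^mV^n)\geq2\pi^2$ for every $(m,n)\neq(0,0)$.

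Next I would invoke Theorem~\ref{T: estimate}. For $\theta$ small enough the gaussian $\xi_\lambda$ yields an instanton projection $p$ of the noncommutative Ising model with $\tau(p)=|\theta|$ and $c_1(p)=-1$; being self-dual, $p$ is a critical point of $S_D$ and so satisfies $(\Delta p)p-p(\Delta p)=0$, whence by Theorem~\ref{T:symmetricharmonic} the unitary $W=1-2p$ is harmonic, and Theorem~\ref{T: estimate} records the sharp value $E(W)=4\pi$. Thus $W$ is a perfectly legitimate field map $C(S^1)\to\nc$ of energy $4\pi$.

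Finally, since $\pi>2$ we have $2\pi^2>4\pi$, so
\[
E(U^mV^n)\ \geq\ 2\pi^2\ >\ 4\pi\ =\ E(W)\qquad\text{for all }(m,n)\neq(0,0),
\]
and the same strict inequality survives doubling, i.e.\ for $\mathcal{L}_D$. Hence no scalar multiple of a non-trivial $U^mV^n$ minimizes the energy over the whole unitary group of $\nc$, although by \cite{Li} each is a minimum within its own connected component of the projective unitary group --- which is precisely the assertion. I expect no serious difficulty: every analytic ingredient (the instanton construction, the bound $S_D(p)\geq-2\pi c_1(p)$, the identity $\mathcal{L}_D(1-2p)=4S_D(p)$) is already packaged in Theorems~\ref{T: estimate} and~\ref{T:symmetricharmonic}, and what remains is the elementary comparison $4\pi<2\pi^2$. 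The one point deserving a line of care --- and the only real ``obstacle'' --- is to record the hypothesis (smallness of $\theta$) under which the equality case of Theorem~\ref{T: estimate} is available; if one wants the conclusion for arbitrary irrational $\theta$ it suffices to replace $W$ by any unitary of energy strictly below $2\pi^2$, harmonic or not.
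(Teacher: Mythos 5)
Your proof is correct and follows essentially the same route as the paper: compute $\mathcal{L}_D(U^mV^n)=4\pi^2(m^2+n^2)$, compare with the harmonic unitary $W=1-2p$ of energy $\mathcal{L}_D(W)=8\pi$ from Theorem~\ref{T: estimate}, and conclude via $4\pi^2>8\pi$. Your added caveats (excluding $(m,n)=(0,0)$ and recording the smallness-of-$\theta$ hypothesis needed for the instanton) are sensible refinements the paper leaves implicit, but the argument is the same.
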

\begin{proof}
Note that if $E(U^mV^n)=2\pi^2(m^2+n^2)$. Thus $\mathcal{L}_D(\phi)=4\pi^2(m^2+n^2)\ge 4\pi^2 > 8\pi$,  where $\phi$ sends $z$ to the scalar multiples of $U^mV^n \in \nc$.
\end{proof}

\section{A group action on field maps}
\begin{lemma}\label{L:action}
Suppose $w$ is an element  in $\nc$.
Either $w^*\delta_j(w)$ or $w\delta_j(w^*)$ is scalar valued if and only of $w$ is the scalar multiple of $U^mV^n$ for some $m,n \in \mathbb{Z}$.  
\end{lemma}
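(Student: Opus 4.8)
\emph{Proof plan.} The plan is to establish the two implications separately, the ``only if'' direction being a one-line computation and the ``if'' direction requiring one short structural step. Throughout I use that an element $w$ on which the derivations act (so $w\in\nc^{\infty}$, which is implicit in the hypothesis) has a unique Fourier expansion $w=\sum_{m,n}a_{mn}U^mV^n$, that $\delta_1$ and $\delta_2$ act diagonally by $\delta_1(U^mV^n)=2\pi i\,m\,U^mV^n$ and $\delta_2(U^mV^n)=2\pi i\,n\,U^mV^n$, that each $\delta_j$ is a $*$-derivation, and --- crucially --- that the $(0,0)$-Fourier coefficient of $\delta_j(h)$ vanishes for every $h$, so that \emph{if $\delta_j(h)$ is a scalar then it is $0$}. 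The ``only if'' direction is then immediate: if $w=\lambda U^mV^n$ then $\delta_1(w)=2\pi i\,m\,w$, $\delta_2(w)=2\pi i\,n\,w$, and $w^*w=ww^*=|\lambda|^2$, so $w^*\delta_j(w)$ and $w\delta_j(w^*)$ are the scalars $\pm 2\pi i\,m\,|\lambda|^2$, $\pm 2\pi i\,n\,|\lambda|^2$; both alternatives hold.

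For the ``if'' direction, since $w\delta_j(w^*)=(w^*)^*\delta_j(w^*)$ and $w^*$ is a scalar multiple of a monomial exactly when $w$ is, it suffices to treat the case that $c_j:=w^*\delta_j(w)$ is a scalar for $j=1,2$. The first thing I would do is reduce to the unitary case: taking adjoints gives $\delta_j(w^*)\,w=\overline{c_j}$, hence $\delta_j(w^*w)=\delta_j(w^*)w+w^*\delta_j(w)=c_j+\overline{c_j}$ is a scalar, and by the remark above $\delta_j(w^*w)=0$ for $j=1,2$; therefore $w^*w\in\ker\delta_1\cap\ker\delta_2=\mathbb{C}1$, say $w^*w=\lambda 1$ with $\lambda\ge 0$. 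If $\lambda=0$ then $w=0=0\cdot U^0V^0$; if $\lambda>0$ then $u:=\lambda^{-1/2}w$ is an isometry, hence a unitary because $\nc$ carries a faithful tracial state, and $u^*\delta_j(u)=\lambda^{-1}c_j$ is again a scalar.

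Finally, for $u$ unitary I would multiply $u^*\delta_j(u)=:c_j'$ on the left by $u$ to obtain $\delta_j(u)=c_j'\,u$, i.e.\ $u$ is a simultaneous eigenvector of $\delta_1$ and $\delta_2$. Expanding $u=\sum a_{mn}U^mV^n$, the relation $\delta_1(u)=c_1'u$ forces $(2\pi i\,m-c_1')a_{mn}=0$ for all $m,n$; choosing $(m_0,n_0)$ with $a_{m_0n_0}\ne 0$ (possible since $u\ne 0$) gives $c_1'=2\pi i\,m_0$ and $a_{mn}=0$ for $m\ne m_0$, and symmetrically $\delta_2(u)=c_2'u$ gives $a_{mn}=0$ for $n\ne n_0$. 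Hence $u=a_{m_0n_0}U^{m_0}V^{n_0}$ with $|a_{m_0n_0}|=1$, so $w=\lambda^{1/2}a_{m_0n_0}U^{m_0}V^{n_0}$ is a scalar multiple of $U^{m_0}V^{n_0}$, as claimed. The only genuinely non-routine point is the reduction to a unitary --- recognizing that the hypothesis forces $w^*w$ to be central; once $w$ is (a scalar multiple of) a unitary, everything reduces to diagonalizing $\delta_1$ and $\delta_2$ on the Fourier series, which is bookkeeping.
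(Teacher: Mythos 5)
Your proof is correct, and it takes a genuinely different route from the paper's. The paper argues directly on the Fourier expansion: writing $w=a_{m\,n}U^mV^n+\sum_{(k,l)\neq(m,n)}a_{k\,l}U^kV^l$, it multiplies out $w^*\delta_1(w)$ term by term and asserts that the resulting cross terms sit in nonzero Fourier modes, so the product cannot be scalar unless only one coefficient survives. You instead first extract the structural consequence that $w^*w$ is scalar --- via the observation that the $(0,0)$-coefficient of $\delta_j(h)$ always vanishes, so a scalar-valued $\delta_j(h)$ must be $0$ --- then normalize to a unitary $u$ and read off from $\delta_j(u)=c_j'u$ that $u$ is a joint eigenvector of $\delta_1,\delta_2$, hence a single monomial. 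What your route buys is rigor at exactly the point where the paper is thin: when $w$ has infinitely many nonzero coefficients, a fixed Fourier mode of $w^*\delta_1(w)$ receives infinitely many cross-term contributions, and the paper does not justify that these cannot cancel. In fact, with only $j=1$ they can (any non-monomial unitary in $C^*(V)$ satisfies $w^*\delta_1(w)=0$), so the hypothesis must be read as holding for both $j=1,2$; your argument uses both derivations essentially and thereby closes this gap, whereas the paper's displayed computation invokes only $\delta_1$. The price is a small amount of operator-algebraic input --- faithfulness of the trace on $A_{\theta}$ to promote the isometry $\lambda^{-1/2}w$ to a unitary --- which is standard. The ``only if'' direction and the reduction of the $w\delta_j(w^*)$ alternative to the $w^*\delta_j(w)$ one by passing to $w^*$ are both handled correctly.
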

\begin{proof}
One direction is obvious. Let us consider the Fourier expansion of $w$,
 i.e., $w=a_{m\,n}U^mV^n+\sum_{(k,l)\neq(m,n)}a_{k\,l}U^kV^l$ and assume at least one $a_{k\,l}$ is non-zero in the sum. Then 
\[\begin{split}
&w^*\delta_1(w)=(a_{m\,n}U^mV^n+\sum_{(k,l)\neq(m,n)}a_{k\,l}U^kV^l)^*(2\pi im a_{m\,n}U^mV^n+\sum_{(k,l)\neq(m,n)} 2\pi i k a_{k\, l}U^kV^l)\\
&=(\overline{a_{m\,n}}(V^*)n(U^*)^m+\overline{a_{k,l}}(V^*)^l(U^*)^k+\cdots)(2\pi im a_{m\,n}U^mV^n+2\pi i k a_{k\, l}U^kV^l+\cdots)\\
&=2\pi im |a_{m\,n}|^2+2\pi im a_{m\,n}e^{2\pi i \theta(m-k)l}\overline{a_{k\,l}}U^{m-k}V^{n-l}+2\pi ik e^{2\pi i \theta (k-m)n}\overline{a_{m\,n}}a_{k\,l}U^{k=m}V^{l-n}+\cdots
\end{split}\]
Thus  $w^*\delta_1(w)$ cannot be scalar if $w$ is not the single term $U^mV^n$ up to the scalar multiple. 
The other statements about $w\delta_j(w^*)$ is proved similarly. 
\end{proof}
\begin{remark}
Note that when either $w^*\delta_i(w)$ or $w\delta_i(w^*)$ becomes scalar, these values are pure imaginary numbers so that $w^*\delta_i(w)+\delta_i(w^*)w=0$ and $w\delta_i(w^*)+\delta_i(w)w^*=0$. This observation will be used later.    
\end{remark}
For any pair $(m,n)\in \mathbb{Z}^2$, let $w$ be the scalar multiple of $U^mV^n$. 
Recall that $\Ad w$ is an inner automorphism sends $a \in \nc$ to $waw^*$.  Then we can introduce the action of $\mathbb{Z}^2$ on field maps to $\nc$ via the composition with the inner automorphism $\Ad w$ . In the following we show that the action generates a family of critical points starting at a critical point.
\begin{theorem}
Let $p$ be an instanton solution of $p(\Delta p)-(\Delta p)p=0$. Then $\Ad w (p)$ is also an instanton solution.
\end{theorem}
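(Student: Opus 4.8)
The plan is to show that conjugation by $w$ commutes with each of the derivations $\delta_1,\delta_2$, and then simply to transport the self-duality equation (and the trace data) through this automorphism.

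First I would record that, since $w$ is a scalar multiple of $U^mV^n$, it is a joint eigenvector of $\delta_1$ and $\delta_2$: $\delta_1(w)=2\pi i m\,w$ and $\delta_2(w)=2\pi i n\,w$, and because each $\delta_j$ is a $*$-derivation ($\delta_j(a^*)=\delta_j(a)^*$) one also gets $\delta_j(w^*)=-2\pi i k_j\, w^*$, where $(k_1,k_2)=(m,n)$. This is essentially the content of Lemma~\ref{L:action} together with the remark following it, which says exactly that $w^*\delta_j(w)$ is the pure imaginary scalar $2\pi i k_j$. Applying the Leibniz rule to $waw^*$, the two scalar contributions $2\pi i k_j\,waw^*$ arising from $\delta_j(w)$ and from $\delta_j(w^*)$ cancel, leaving
\[
\delta_j(\Ad w(a))=\delta_j(waw^*)=w\,\delta_j(a)\,w^*=\Ad w(\delta_j(a))
\]
for every $a\in\nc$ and $j=1,2$. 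Consequently $\Ad w$ commutes with the Laplacian $\Delta=\delta_1^2+\delta_2^2$ and with $\overline{\partial}=\tfrac12(\delta_1+i\delta_2)$.

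Granting this, the theorem follows at once. Since $\Ad w$ is a $*$-automorphism, $\Ad w(p)$ is again a projection in $\nc$; since $\tau$ is a trace, $\tau(\Ad w(p))=\tau(wpw^*)=\tau(p)$, so $\Ad w(p)$ sits in the same connected component of $P_{\theta}$ and carries the same first Chern number as $p$. Applying $\Ad w$ to the self-duality equation $\overline{\partial}(p)\,p=0$ and using the commutation just established,
\[
\overline{\partial}(\Ad w(p))\,\Ad w(p)=\Ad w(\overline{\partial}(p))\,\Ad w(p)=\Ad w\big(\overline{\partial}(p)\,p\big)=0,
\]
so $\Ad w(p)$ again solves the self-duality equation; conjugating $p(\Delta p)-(\Delta p)p=0$ by $w$ likewise shows it satisfies the Euler--Lagrange equation. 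Hence $\Ad w(p)$ is an instanton solution.

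The hard part is not analytic at all — the entire argument rests on the elementary observation that $U^mV^n$ is a joint eigenvector of the $\delta_j$, which makes $\Ad w$ an isometric symmetry of the whole variational setup. The only places to be careful are the bookkeeping with the $*$-derivation identity $\delta_j(w^*)=\delta_j(w)^*$ (so that the scalar terms genuinely cancel) and fixing the meaning of ``instanton solution'' as a projection obeying $\overline{\partial}(p)p=0$ — which by the discussion preceding (\ref{E:enery>chern}) automatically solves the Euler--Lagrange equation — rather than literally one built from a Gaussian; with that reading the statement is precisely what the identity $\Ad w\circ\overline{\partial}=\overline{\partial}\circ\Ad w$ gives.
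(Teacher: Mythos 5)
Your proof is correct, and it reorganizes the argument around a cleaner pivot than the paper does. The paper's proof expands $\delta_i(q)$ and $\delta_i^2(q)$ for $q=wpw^*$ by brute-force Leibniz, writes out all nine terms of $q(\Delta q)-(\Delta q)q$, and cancels them pairwise (implicitly using that $w^*\delta_i(w)$ and $w\delta_i(w^*)$ are opposite imaginary scalars, the content of the remark after Lemma~\ref{L:action}) to arrive at $w\bigl(p(\Delta p)-(\Delta p)p\bigr)w^*=0$. You instead isolate the single identity $\delta_j(waw^*)=w\,\delta_j(a)\,w^*$, i.e.\ that $\Ad w$ intertwines with both derivations, after which the preservation of the Euler--Lagrange equation, of the self-duality equation $\overline{\partial}(p)p=0$, and of the trace and Chern data are all one-line consequences. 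This buys you something real: the same lemma immediately powers Theorem~\ref{T:harmonicunitary} and the later $\mathbb{Z}^2$-action results for the Mathai--Rosenberg and $\SU(2,\mathbb{C})$ models, whereas the paper redoes the cancellation in each proof; you also address the self-duality (genuinely ``instanton'') aspect of the statement, which the paper's proof silently drops in favour of the Euler--Lagrange equation alone. The one point you rightly flag is the ambiguity of ``instanton solution'' in the theorem statement; your reading (projection satisfying the self-duality equation) is the stronger one and your argument covers it, while the paper only verifies the weaker harmonicity condition. A very minor caveat: preservation of the trace alone does not in general place two projections in the same connected component --- the cleaner justification is that $\Ad w$ is inner with $w$ in the connected unitary group of $\nc$, so $wpw^*$ is joined to $p$ by a path of projections; but this is peripheral to the claim.
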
 
\begin{proof}
Without loss of generality we can assume that $w=U^mV^n$. If we let $p$ be the instanton solution and  $q=wpw^*$, then
\[\delta_i(q)=\delta_i(w)pw^*+w\delta_i(p)w^*+wp\delta_i(w^*).\]
Therefore, we have 
\[\begin{split}
\delta_i^2(q)&=\delta^2_i(w)pw^*+\delta_i(w)\delta_i(p)w^*+\delta_i(w)p\delta(w^*)\\
&+\delta_i(w)\delta_i(p)+w\delta^2_i(p)w^*+w\delta_i(p)\delta_i(w^*) \\
&+\delta_i(w)p\delta_i(w^*)+w\delta_i(p)\delta_i(w^*)+wp\delta^2_i(w^*).
\end{split}\]
It follows that 
\begin{align*}\label{E:instanton}
&q(\Delta q)-(\Delta q)q=\sum_i [wpw^*\delta^2_i(w)pw^*+2wpw^*\delta_i(w)\delta_i(p)w^*+2wpw^*\delta_i(w)p\delta(w^*)\\
&+wp\delta_i^2(p)w^*+2wp\delta_i(p)\delta_i(w^*)+wp\delta^2_i(w^*) ]-\sum_i [\delta^2_i(w)pw^* +2\delta_i(w)\delta_i(p)pw^* \\ 
&+2\delta_i(w)p\delta_i(w^*)wpw^*+w\delta^2_i(p)pw^*+2w\delta_i(p)\delta_i(w^*)wpw^*+wp\delta^2_i(w^*)wpw^*]\\
\intertext{Since the sencond and fifth terms are cancelled out in each bracket and the remaining corrsponding terms from each bracket are cancelled out, it can be simplified as follows;} 
&=w(p(\Delta p)-(\Delta p)p)w^*=0.  
\end{align*}
\end{proof}  
In fact, it was observed in \cite{DKL} that the Gaussian solutions $\xi_{\lambda}$, $\xi_{\lambda'}$ of the self-duality equation are gauge unvariant under the right action of $A_{-1/ \theta}$ on the module if and only if $\xi_{\lambda'}=\xi_{\lambda}U_1^mV_1^n$ for some integers $m,n$ where $U_1$ and $V_1$ are generators of $A_{-1/ \theta}$. But we do not know if the fact that $wpw^*$ is the instanton solution impies that $w$ is of the form $U^mV^n$.    
\begin{theorem}\label{T:harmonicunitary}
Suppose that $v$ is a harmonic unitary. Then $\Ad w (v)$ is also a harmonic unitary. 
\end{theorem}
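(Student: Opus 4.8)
The plan is to exploit a single structural fact: when $w$ is a scalar multiple of $U^mV^n$, the inner automorphism $\Ad w$ commutes with each derivation $\delta_i$. First I would discard the scalar, since it cancels in $\Ad w$, and assume $w=U^mV^n$. From $\delta_1(U^mV^n)=2\pi i m\,U^mV^n$ and $\delta_2(U^mV^n)=2\pi i n\,U^mV^n$ we see $\delta_i(w)=c_i w$ with $c_i\in i\mathbb{R}$ (equivalently $w^*\delta_i(w)=c_i$, $w\delta_i(w^*)=-c_i$, which is the content of the Remark following Lemma \ref{L:action}). Hence for any $a\in\nc$,
\[
\delta_i(waw^*)=\delta_i(w)aw^*+w\delta_i(a)w^*+wa\delta_i(w^*)=c_i waw^*+w\delta_i(a)w^*-c_i waw^*=w\delta_i(a)w^*,
\]
so $\delta_i\circ\Ad w=\Ad w\circ\delta_i$ on all of $\nc$.

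Next I would set $q=\Ad w(v)=wvw^*$, which is a unitary since $w$ and $v$ are. Iterating the commutation gives $\delta_i^2(q)=w\,\delta_i^2(v)\,w^*$, hence $\Delta q=w(\Delta v)w^*$, and also $q^*\delta_i(q)=wv^*w^*\cdot w\delta_i(v)w^*=w\,(v^*\delta_i(v))\,w^*=\Ad w\bigl(v^*\delta_i(v)\bigr)$. Applying $\delta_i$ once more and summing, the harmonic equation (\ref{E:harmonic}) for $q$ reads
\[
\sum_{i=1}^2\delta_i\bigl(q^*\delta_i(q)\bigr)=\sum_{i=1}^2\Ad w\bigl(\delta_i(v^*\delta_i(v))\bigr)=\Ad w\Bigl(\sum_{i=1}^2\delta_i(v^*\delta_i(v))\Bigr)=\Ad w(0)=0,
\]
since $v$ is harmonic. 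Equivalently one checks $q^*(\Delta q)+\sum_i\delta_i(q)^*\delta_i(q)=w\bigl[v^*(\Delta v)+\sum_i\delta_i(v)^*\delta_i(v)\bigr]w^*=0$.

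I do not anticipate a serious obstacle; the only point requiring care is the commutation $\delta_i\circ\Ad w=\Ad w\circ\delta_i$, and that rests entirely on $\delta_i(w)$ being a scalar multiple of $w$, a property special to the monomials $U^mV^n$ (and exactly the hypothesis under which $\Ad w$ was introduced as a $\mathbb{Z}^2$-action on field maps). It is worth noting that the same observation proves more: $\Ad w$ sends solutions of each Euler--Lagrange equation in this paper to solutions, which streamlines and unifies the preceding theorem about instantons as well.
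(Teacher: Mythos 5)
Your proof is correct and rests on the same mechanism as the paper's: both exploit that $\delta_i(w)=c_i w$ with $c_i$ purely imaginary, so that the cross terms involving $\delta_i(w)$ and $\delta_i(w^*)$ cancel and one gets $\delta_i\bigl(\Ad w(v)^*\delta_i(\Ad w(v))\bigr)=w\,\delta_i(v^*\delta_i(v))\,w^*$. Packaging this once as the commutation $\delta_i\circ\Ad w=\Ad w\circ\delta_i$ is a tidier organization than the paper's term-by-term cancellation, but it is the same argument in substance.
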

\begin{proof}
Let $u$ be the generator of $C(S_1)$ and let $v$ be the image of $u$  for the field map $\phi:C(S_1) \to \nc$. Let $\psi= \Ad w \circ \phi $. Note that 
\[(\psi(u))^*\delta_j(\psi(u))=w^*\delta_j(w)+wv^*\delta_j(v)w^*+w\delta_j(w^*).\] 
By the assumption $w$,  it follows that 
\[\delta_j[ (\psi(u))^*\delta_j(\psi(u))]=\delta_j(w)v^*\delta_j(v)w^*+w\delta_j(v^*\delta_j(v))w^*+wv^*\delta_j(v)\delta_j(w^*).\]
If $\delta_j(w)C=D\delta_j(w)$ for some $C, D \in \nc$, then $wC=Dw$, and vice versa. Thus the first term and the third term in the above  are cancelled out since $\delta_j(w)w^*+w\delta_j(w^*)=0$.  
Consequently, 
\[\sum_j \delta_j(\psi(u^*) \delta_j(\psi(u)))=w\left[\sum_j \delta_j(v^*\delta_j(v))\right]w^*=0.\]
\end{proof}
Since $E(u)$ is invariant under the multiplication of $u$ by a scalar $\lambda \in \mathbb{T}$, there is a natural gauge action of $\mathbb{T}$ on harmonic unitaries. Denote by $[u]$ the class in the projective unitary group $PU(\nc)=U(\nc)/\mathbb{T}$.  
\begin{corollary}
$[u]$ is invariant under the $\mathbb{Z}^2$-action via the inner automorphism $\Ad w$. In other words, $\Ad w (u)$ is the scalar multiple of $u$. 
\end{corollary}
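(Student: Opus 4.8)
The plan is to unwind the definitions until the statement becomes a Heisenberg commutator identity. The first observation is that $\Ad w$ is insensitive to the scalar in $w$: if $w=\mu U^mV^n$ with $|\mu|=1$ then $\Ad w=\Ad(U^mV^n)$, so we may assume outright that $w=U^mV^n$. Second, for the sharp form ``$\Ad w(u)$ is the scalar multiple of $u$'' to be literally true one must be in the trivial regime, so I would read the corollary as a statement about the trivial harmonic unitaries, i.e. take $u$ to be a scalar multiple of some $U^aV^b$, and --- the scalar again being irrelevant for the claim --- assume $u=U^aV^b$.

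The core step is then the computation
\[
\Ad w(u)=(U^mV^n)(U^aV^b)(V^{-n}U^{-m}),
\]
carried out by commuting powers of $U$ past powers of $V$ with the defining relation $UV=e^{2\pi i\theta}VU$, equivalently $V^sU^r=e^{-2\pi i\theta rs}U^rV^s$. Pulling $V^n$ to the right past $U^a$ contributes $e^{-2\pi i\theta na}$, pulling $V^b$ to the right past $U^{-m}$ contributes $e^{+2\pi i\theta bm}$, and all the unitaries then collapse down to $U^aV^b$; hence $\Ad w(u)=e^{2\pi i\theta(bm-na)}u$. Thus $\Ad w(u)\in\mathbb{T}\cdot u$, so $[\Ad w(u)]=[u]$ in $PU(\nc)$, which is exactly the assertion that $[u]$ is fixed by the $\mathbb{Z}^2$-action; note that the exponent $bm-na$ is the symplectic pairing of the lattice vectors $(m,n)$ and $(a,b)$, which makes the $\mathbb{Z}^2$-equivariance transparent.

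There is no analytic difficulty here; the only delicate point is interpretive, namely recognizing the intended scope. For a genuinely non-trivial harmonic unitary --- say $W=1-2p$ with $p$ an instanton --- one has $\Ad w(W)=1-2\,wpw^*$, and by the gauge-equivalence of the Gaussians $\xi_\lambda$ recalled above (from \cite{DKL}) the projection $wpw^*$ is in general a \emph{different} instanton, so the sharp restatement fails and one is left only with the weaker, always-true fact that $\Ad w$ preserves the connected component of $[u]$ in $PU(\nc)$, a consequence of $\Ad w$ acting trivially on $K_1(\nc)\cong\pi_0(U(\nc))$. So the main ``obstacle'' is merely to pin the corollary down to the trivial harmonic unitaries $U^mV^n$, after which it reduces to the one-line commutation identity above together with the fact that scalars drop out of $\Ad$.
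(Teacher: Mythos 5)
Your proposal is correct, and it supplies an argument where the paper gives none: this corollary is stated without any proof. The computation you give is surely the one the author had in mind. Dropping the scalar in $w$, taking $w=U^mV^n$ and $u=U^aV^b$, and using $V^sU^r=e^{-2\pi i\theta rs}U^rV^s$ to get $\Ad w(u)=e^{2\pi i\theta(bm-na)}\,u$ is exactly right, and identifying the exponent as the symplectic pairing of $(m,n)$ with $(a,b)$ is a nice touch that the paper omits. You are also right to flag the interpretive issue, which is a genuine defect of the statement rather than of your proof: read literally, with $u$ an arbitrary harmonic unitary, the assertion ``$\Ad w(u)$ is the scalar multiple of $u$'' is false --- for the symmetric harmonic unitaries $W=1-2p$ built from instantons one has $\Ad w(W)=1-2wpw^*$, and comparing traces forces the putative scalar to be $1$ and hence $wpw^*=p$, which fails in general. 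So the corollary can only be about the trivial harmonic unitaries $\lambda U^mV^n$, in parallel with the later corollaries that restrict to $[\phi_A]$; your restriction of scope is the correct reading, and with it the proof is complete.
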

Similarly we can introduce $\mathbb{Z}^2$- actions on field maps on $\nc$ via $w$.  
\begin{theorem}
Suppose $\phi:\nc \to \nc$ is a harmonic map. Then $\Ad w \circ \phi$ is also a harmonic map.
\end{theorem}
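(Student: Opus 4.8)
The plan is to follow the template of Theorem \ref{T:harmonicunitary}, adding the bookkeeping needed for the ``integral'' (constraint) part of the Euler--Lagrange system (\ref{E:harmonicmap}). The key structural remark is that, because $w$ is a scalar multiple of $U^mV^n$, Lemma \ref{L:action} and the Remark following Lemma \ref{L:action} give $\delta_j(w)=\mu_j w$ with $\mu_j$ purely imaginary, hence $\delta_j(w^*)=-\mu_j w^*$, so that for every $a\in\nc$ one has $\delta_j(waw^*)=w\,\delta_j(a)\,w^*$; in other words the $*$-automorphism $\Ad w$ commutes with both derivations $\delta_1,\delta_2$, and $\phi\mapsto\Ad w\circ\phi$ is a bijection of $\Hom(\nc,\nc)$ onto itself. (Consequently $\mathcal{L}_{G,D}$ is $\Ad w$-invariant by the trace property of $\Tr$, which already makes the statement plausible; below I carry the argument out at the level of the Euler--Lagrange equations, as the paper's other proofs do.)

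Write $\psi=\Ad w\circ\phi$. Since $\Ad w$ is a $*$-homomorphism commuting with $\delta_j$, for $X=U$ and $X=V$ one gets
\[
\psi(X)^*\delta_j(\psi(X))=\Ad w\bigl(\phi(X)^*\delta_j(\phi(X))\bigr),\qquad
\delta_j\bigl[\psi(X)^*\delta_j(\psi(X))\bigr]=\Ad w\bigl(\delta_j[\phi(X)^*\delta_j(\phi(X))]\bigr),
\]
which are the analogues of the first computations in the proof of Theorem \ref{T:harmonicunitary}.

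Next I would transport the constrained self-adjoint multipliers. Recall that $\psi$ is harmonic exactly when the expression (\ref{E:harmonicmap}) written for $\psi$ vanishes for every self-adjoint pair $(A',B')$ with $A'-\psi(V)^*A'\psi(V)=B'-\psi(U)^*B'\psi(U)$. Given such a pair, set $A=w^*A'w$ and $B=w^*B'w$, again self-adjoint; multiplying the $\psi$-constraint on the left by $w^*$ and on the right by $w$, and inserting $\psi(X)=\Ad w(\phi(X))$, turns it into the $\phi$-constraint $A-\phi(V)^*A\phi(V)=B-\phi(U)^*B\phi(U)$, and $A'\mapsto w^*A'w$ is in fact a bijection between the two constraint sets (inverse $A\mapsto wAw^*$). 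Then, by cyclicity of $\Tr$ and the displayed identity,
\[
\Tr\bigl(A'\,\delta_j[\psi(U)^*\delta_j(\psi(U))]\bigr)=\Tr\bigl(A'\,w\,\delta_j[\phi(U)^*\delta_j(\phi(U))]\,w^*\bigr)=\Tr\bigl(A\,\delta_j[\phi(U)^*\delta_j(\phi(U))]\bigr),
\]
and likewise for the $B'$ terms. Summing over $j$, the Euler--Lagrange expression for $\psi$ with multipliers $(A',B')$ equals that for $\phi$ with multipliers $(A,B)$, which is $0$ because $\phi$ is harmonic and $(A,B)$ satisfies the $\phi$-constraint. Since $(A',B')$ was an arbitrary admissible pair, $\psi=\Ad w\circ\phi$ is harmonic.

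I do not anticipate a real obstacle. The only genuine use of the hypothesis on $w$ is the identity $\delta_j\circ\Ad w=\Ad w\circ\delta_j$, i.e.\ Lemma \ref{L:action}; and the only step requiring a little care is checking that $A'\mapsto w^*A'w$ carries the $\psi$-constraint set onto the $\phi$-constraint set, so that the ``vanishing for all admissible multipliers'' characterization of harmonicity transfers correctly. If one prefers to work with the energy functional rather than its Euler--Lagrange equation, the argument collapses to the single observation that $\Ad w$ is an energy-preserving bijection of $\Hom(\nc,\nc)$, hence sends critical points to critical points.
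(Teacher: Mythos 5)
Your proposal is correct and follows essentially the same route as the paper's proof: you transport the constraint pair $(A',B')$ to $(A,B)=(w^*A'w,\,w^*B'w)$, verify the $\phi$-constraint, and use the identity $\delta_j[\psi(X)^*\delta_j(\psi(X))]=w\,\delta_j[\phi(X)^*\delta_j(\phi(X))]\,w^*$ together with cyclicity of $\Tr$. Your observation that the whole cancellation amounts to $\Ad w$ commuting with $\delta_1,\delta_2$ is a clean packaging of the same computation the paper carries out term by term.
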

\begin{proof}
Let $\phi$ be a harmonic map and $\psi=\Ad w \circ \phi$. To prove the claim, we must show taht for every $A', B'$ satisfying $A'-\psi(V^*)A'\psi(V)=B'-\psi(U^*)B'\psi(U)$  the following equation holds.
\begin{equation}\label{E:harmonic1}
\sum_j \Tr(A'\delta_j\left[ \psi(U^*)\delta_j(\psi(U))\right])+\sum_j \Tr(B'\delta_j \left[ \psi(V)\delta_j(\psi(V))\right])=0
\end{equation}
First, note that If we take $A=w^*A'w$, $B=w^*B'w$, then $A-\phi(V^*)A\phi(V)=B-\phi(U^*)B\phi(U)$. Since $\phi$ is a harmonic map, 
\[\sum_j \Tr(A\delta_j\left[ \phi(U^*)\delta_j(\phi(U))\right])+\sum_j \Tr(B\delta_j \left[ \phi(V)\delta_j(\phi(V))\right])=0\] 
holds. Then using invariance of the trace under under cyclic permutation we have 

\begin{equation}\label{E:harmonic2}
\sum_j \Tr(A'w\delta_j\left[ \phi(U^*)\delta_j(\phi(U))\right]w^*)+\sum_j \Tr(B'w\delta_j \left[ \phi(V)\delta_j(\phi(V))\right]w^*)=0.
\end{equation}
Recall that $\delta_j[\psi(U^*)\delta_j(\psi(U))]=w\delta_j[\phi(U^*)\delta_j(\phi(U))]w^*$ as in the proof of Theorem \ref{T:harmonicunitary}. Simliarly, $\delta_j[\psi(V^*)\delta_j(\psi(V))]=w\delta_j[\phi(V^*)\delta_j(\phi(V))]w^*$. Therefore (\ref{E:harmonic2}) is equal to (\ref{E:harmonic1}).
\end{proof}
There is a  gauge action of $\mathbb{T}^2$ on harmonic maps $\phi$ by sending $U$ to $\lambda \phi(U)$ and $V$ to $\mu \phi(V)$ for $(\lambda, \mu) \in \mathbb{T}^2$ . Denote by $[\phi]$ the orbit of the harmonic map $\phi$ under the gauge action.  

\begin{corollary}
Let $\phi_A$ be a harmonic map on $\nc$ associated with an element $A \in \SL(2, \mathbb{Z})$. Then  $[\phi_A]$ is  invariant under the action of $\mathbb{Z}^2$.
\end{corollary}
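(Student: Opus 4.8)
The plan is to reduce the corollary to an elementary computation: conjugating a monomial $U^pV^q$ by a monomial $U^mV^n$ changes it only by a scalar in $\mathbb{T}$, and such a scalar change is precisely what the $\mathbb{T}^2$ gauge action does. First I would note that, by the preceding theorem, $\Ad w\circ\phi_A$ is again a harmonic map on $\nc$, so its gauge orbit $[\Ad w\circ\phi_A]$ is defined; moreover, since $\Ad w$ commutes with multiplication by scalars, the $\mathbb{Z}^2$-action descends to the set of orbits $[\phi]$, so it suffices to prove $[\Ad w\circ\phi_A]=[\phi_A]$. Because scalars are central, we may assume $w=U^mV^n$ for $(m,n)\in\mathbb{Z}^2$.

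Next I would carry out the conjugation explicitly. Writing $\phi_A(U)=U^pV^q$ and $\phi_A(V)=U^rV^s$ with $A=\begin{pmatrix} p & q \\ r & s \end{pmatrix}\in\SL(2,\mathbb{Z})$, and applying the relation $UV=e^{2\pi i\theta}VU$ repeatedly, one obtains $\Ad(U^mV^n)(U^pV^q)=e^{2\pi i\theta(mq-np)}U^pV^q$ and likewise $\Ad(U^mV^n)(U^rV^s)=e^{2\pi i\theta(ms-nr)}U^rV^s$. Hence, setting $\lambda=e^{2\pi i\theta(mq-np)}$ and $\mu=e^{2\pi i\theta(ms-nr)}$ in $\mathbb{T}$, we get $(\Ad w\circ\phi_A)(U)=\lambda\,\phi_A(U)$ and $(\Ad w\circ\phi_A)(V)=\mu\,\phi_A(V)$; that is, $\Ad w\circ\phi_A$ is obtained from $\phi_A$ by the gauge action of $(\lambda,\mu)\in\mathbb{T}^2$. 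Therefore $[\Ad w\circ\phi_A]=[\phi_A]$, which is the assertion.

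There is no real obstacle here: the only thing to watch is the bookkeeping of the commutation scalars — getting the exponents $mq-np$ and $ms-nr$ with the correct signs from successive applications of $V^kU^l=e^{-2\pi i\theta kl}U^lV^k$ — and, if one wishes to be thorough, verifying that the $\mathbb{Z}^2$-action is well defined on orbits, which is immediate from the centrality of $\mathbb{T}$. One may also remark that the same computation shows that the orbit of \emph{any} monomial harmonic map of the form $U\mapsto\lambda U^pV^q$, $V\mapsto\mu U^rV^s$ is $\mathbb{Z}^2$-invariant, not only those coming from $\SL(2,\mathbb{Z})$.
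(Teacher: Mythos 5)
Your proof is correct, and it is exactly the argument the paper intends: the corollary is stated there without proof, the point being precisely that $\Ad(U^mV^n)$ multiplies each monomial $\phi_A(U)=U^pV^q$, $\phi_A(V)=U^rV^s$ by a scalar in $\mathbb{T}$, which is absorbed by the $\mathbb{T}^2$ gauge action. Your commutation scalars $e^{2\pi i\theta(mq-np)}$ and $e^{2\pi i\theta(ms-nr)}$ check out, and your closing remark that the argument never uses $A\in\SL(2,\mathbb{Z})$ is accurate.
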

Finally we check  the $\mathbb{Z}^2$-action on field maps from $\SU(2,\mathbb{C})$ to $\nc$. 
\begin{theorem}
Suppose $\phi:\SU(2,\mathbb{C}) \to \nc$ is a solution of the Euler-Lagrange equations in Proposition \ref{P:SU(2)}. Then $\Ad w \circ \phi$ is a solution of the Euler-Lagrange equations. 
\end{theorem}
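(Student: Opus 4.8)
The plan is to follow the strategy of the two preceding theorems on field maps to $\nc$: transport the data of the constraint system back along $\Ad w$, use that $\phi$ already solves its Euler--Lagrange equations, and carry the resulting identity forward by cyclicity of $\Tr$. Write $\psi=\Ad w\circ\phi$. Since $\Ad w$ is a $*$-automorphism of $\nc$ and $\phi$ a coercive homomorphism with $\phi(\a)=\mu u$, $\phi(\g)=\nu v$, the composite $\psi$ is again a coercive homomorphism from $\SU(2,\mathbb{C})$ to $\nc$, with $\psi(\a)=w\phi(\a)w^*=\mu\,wuw^*$ and $\psi(\g)=w\phi(\g)w^*=\nu\,wvw^*$ scalar multiples (by the same $\mu,\nu$) of unitaries, so the equations of Proposition \ref{P:SU(2)} are meaningful for $\psi$.

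First I would reduce to $w=U^mV^n$ and record the two properties of $w$ that drive everything: by Lemma \ref{L:action} and the Remark following it, $c_j:=w^*\delta_j(w)$ is a purely imaginary scalar, $\delta_j(w)=c_jw$, $\delta_j(w^*)=-c_jw^*$, and $\delta_j(w)w^*+w\delta_j(w^*)=0$. Exactly as in the proof of Theorem \ref{T:harmonicunitary} --- the $c_j$-terms produced when $\delta_j$ falls on $w$ or $w^*$ cancel in pairs because $c_j$ is central --- these give the conjugation identities
\[
\psi(\a)^*\delta_j(\psi(\a))=w\bigl[\phi(\a)^*\delta_j(\phi(\a))\bigr]w^*,\qquad
\delta_j\bigl[\psi(\a)^*\delta_j(\psi(\a))\bigr]=w\,\delta_j\bigl[\phi(\a)^*\delta_j(\phi(\a))\bigr]\,w^*,
\]
and likewise with $\g$ in place of $\a$.

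Next, given self-adjoint $A',B'\in\nc$ satisfying the constraints (\ref{E:constraints}) written for $\psi$, I would set $A=w^*A'w$ and $B=w^*B'w$ (still self-adjoint) and check that $(A,B)$ satisfies the constraints (\ref{E:constraints}) for $\phi$. Substituting $\psi(\a)=w\phi(\a)w^*$, $\psi(\g)=w\phi(\g)w^*$, $\psi(\g^*)=w\phi(\g^*)w^*$ into each of the two $\psi$-constraints and using $w^*w=ww^*=1$ (so $w^*A'=Aw^*$, $B'w=wB$, and so on), both equations collapse to $w\cdot(\text{the corresponding }\phi\text{-constraint})\cdot w^*=0$, i.e.\ to the two constraints in (\ref{E:constraints}) for $\phi$. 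Since $\phi$ solves its Euler--Lagrange equations, $\sum_j\Tr(A\,\delta_j[\phi(\a)^*\delta_j(\phi(\a))])+\sum_j\Tr(B\,\delta_j[\phi(\g)^*\delta_j(\phi(\g))])=0$. Finally, by the conjugation identities and cyclicity of the trace, $\Tr(A'\,\delta_j[\psi(\a)^*\delta_j(\psi(\a))])=\Tr(A'w\,\delta_j[\phi(\a)^*\delta_j(\phi(\a))]\,w^*)=\Tr(A\,\delta_j[\phi(\a)^*\delta_j(\phi(\a))])$, and similarly for the $\g$/$B$ term, so the Euler--Lagrange expression for $\psi$ with data $(A',B')$ equals that for $\phi$ with data $(A,B)$, which vanishes.

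The only real obstacle is bookkeeping: verifying that the transported pair $(w^*A'w,\,w^*B'w)$ still solves the coupled system (\ref{E:constraints}) --- the asymmetric placement of $B'$ in the second constraint needs a moment's care --- but there is no analytic difficulty, since $w$ is a unitary monomial and $\delta_j(w),\delta_j(w^*)$ are scalar multiples of $w,w^*$.
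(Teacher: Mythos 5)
Your proposal is correct and follows essentially the same route as the paper's own proof: transport the multipliers via $A=w^*A'w$, $B=w^*B'w$, check that the constraints (\ref{E:constraints}) are preserved under conjugation, use the conjugation identity $\delta_j[\psi(\a)^*\delta_j(\psi(\a))]=w\,\delta_j[\phi(\a)^*\delta_j(\phi(\a))]\,w^*$ (the paper keeps a scalar remainder at first order that dies under $\delta_j$, while you note it cancels outright for coercive maps --- both are fine), and finish by cyclicity of $\Tr$. No gaps.
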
 
\begin{proof}

Let $\psi=\Ad w \circ \phi$ and  $A', B'$ such that 
\[   \psi(\a)A'\psi(\g)-\psi(\g)\psi(\a)A'=\psi(\g)B'\psi(\a)-\psi(\a)\psi(\g)B'\]
\[ \psi(\a)A'\psi(\g^*)-\psi(\g^*)\psi(\a)A'=\psi(\a)B'\phi(\g^*)-B'\psi(\g^*)\psi(\a)\]

If $A=w^*A'w$, and $B=w^*B'w$, then 
\[
\begin{split}
\phi(\a)A\phi(\g)-\phi(\g)\phi(\a)A&=\phi(\a)w^*A'w\phi(\g)-\phi(\g)\phi(\a)w^*A'w\\
&=w^*\left(\psi(\a)A'\psi(\g)-\psi(\g)\psi(\a)A' \right)w\\
&=w^*\left(  \psi(\g)B'\psi(\a)-\psi(\a)\psi(\g)B' \right)w\\
&=\phi(\g)B\phi(\a)-\phi(\a)\phi(\g)B
\end{split}
\]
Also, 
\[
\begin{split}
\phi(\a)A\phi(\g^*)-\phi(\g^*)\phi(\a)A&=\phi(\a)w^*A'w\phi(\g^*)-\phi(\g^*)\phi(\a)w^*A'w\\
&=w^* \left( \psi(\a)A'\psi(\g^*)-\psi(\g^*)\psi(\a)A' \right)w\\
&=w^*(\psi(\a)B'\psi(\g^*)-B'\psi(\g^*)\psi(\a))w\\
&=\phi(\a)B\phi(\a)-\phi(\a)\phi(\g)B
\end{split}
\]
Since $\phi$ is the solution of the Euler-Lagrange equation, for the above $A, B$ 
\[0=\sum_{j=1}^2 \{ \Tr(A\delta_j[\phi(\a)^*\delta_j(\phi(\a))])\}+\sum_{j=1}^2 \{ \Tr(B\delta_j[\phi(\g)^*\delta_j(\phi(\g))])\}.\]
On the other hand, 
\[\psi(\a^*)\delta_j(\psi(\a))= \text{scalar}+w\phi(\a^*)\delta_j(\phi(\a))w^*.\]
Thus, 
\[\begin{split}
\delta_j[ \psi(\a^*)\delta_j(\psi(\a))]& = \delta_j(w)[ \phi(\a^*)\delta_j(\phi(\a))]w^*+ w \delta_j[\phi(\a^*)\delta_j(\phi(\a))]w^*+ w[\phi(\a^*)\delta_j(\phi(\a))]\delta_j(w^*)\\
&=w \delta_j[\phi(\a^*)\delta_j(\phi(\a))]w^*
\end{split}
 \]
since the first term and the third term are cancelled out.

\[\begin{split}
&\sum_{j=1}^2 \{ \Tr(A' \delta_j[\psi(\a)^*\delta_j(\psi(\a))])\}+\sum_{j=1}^2 \{ \Tr(B' \delta_j[\psi(\g)^*\delta_j(\psi(\g))])\}\\
&=\sum_{j=1}^2 \{ \Tr(A'w\delta_j[\phi(\a)^*\delta_j(\phi(\a))]w^*)\}+\sum_{j=1}^2 \{ \Tr(B' w\delta_j[\phi(\g)^*\delta_j(\phi(\g))]w^*)\}\\
&=\sum_{j=1}^2 \{ \Tr(w^*A'w\delta_j[\phi(\a)^*\delta_j(\phi(\a))])\}+\sum_{j=1}^2 \{ \Tr(w^*B' w\delta_j[\phi(\g)^*\delta_j(\phi(\g))])\}\\ 
&=\sum_{j=1}^2 \{ \Tr(A\delta_j[\phi(\a)^*\delta_j(\phi(\a))])\}+\sum_{j=1}^2 \{ \Tr(B\delta_j[\phi(\g)^*\delta_j(\phi(\g))])\}=0.
\end{split}\]
Here we used the cylic property of $\Tr$ before the final step.

\end{proof}
Again there is a natrual guage action of $\mathbb{T}^2$ on coercive maps from $\SU(2,\mathbb{C})$ to $\nc$. Denote by $[\phi]$ the orbit of the coercive map $\phi$ under the guage action.  
\begin{corollary}
Let $\phi_A$ be a coercive map from $\SU(2,\mathbb{C})$ to $\nc$ associated with an element $A \notin \GL(2, \mathbb{Z})$. Then  $[\phi_A]$ is  invariant under the action of $\mathbb{Z}^2$.
\end{corollary}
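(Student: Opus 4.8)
The plan is to prove the corollary by an explicit computation, exploiting the fact that conjugation by a monomial only multiplies a monomial by a unimodular scalar. Since $\phi_A$ sends the generators $\a,\g$ to scalar multiples of the monomials $U^pV^q$ and $U^rV^s$, the composite $\Ad w\circ\phi_A$ will differ from $\phi_A$ only in the scalar coefficients of these monomials, and those coefficients have modulus one; so $\Ad w\circ\phi_A$ lies in the $\mathbb{T}^2$-gauge orbit of $\phi_A$, which is exactly the assertion $[\Ad w\circ\phi_A]=[\phi_A]$.

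In detail, I would proceed as follows. First, since scalars are central in $\nc$, the automorphism $\Ad w$ is unchanged if we drop the scalar in front of $U^mV^n$, so we may assume $w=U^mV^n$. Next, from $UV=e^{2\pi i\theta}VU$ one derives $V^nU^p=e^{-2\pi i\theta np}U^pV^n$ and $V^qU^{-m}=e^{2\pi i\theta qm}U^{-m}V^q$, and collecting these phases gives
\[
\Ad w(U^pV^q)=U^mV^n\,U^pV^q\,V^{-n}U^{-m}=e^{2\pi i\theta(mq-np)}\,U^pV^q ,
\]
and likewise $\Ad w(U^rV^s)=e^{2\pi i\theta(ms-nr)}\,U^rV^s$. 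Applying this to $\phi_A(\a)=\tfrac{1}{\sqrt2}U^pV^q$ and $\phi_A(\g)=\tfrac{1}{\sqrt2}U^rV^s$ yields
\[
(\Ad w\circ\phi_A)(\a)=\lambda\,\phi_A(\a),\qquad (\Ad w\circ\phi_A)(\g)=\mu\,\phi_A(\g),
\]
with $\lambda=e^{2\pi i\theta(mq-np)}$ and $\mu=e^{2\pi i\theta(ms-nr)}$ both in $\mathbb{T}$. In particular $\Ad w\circ\phi_A$ again sends $\a,\g$ to scalar multiples of unitaries with the norm constraint preserved, so it is again a coercive map and, by the preceding theorem, still a solution of the Euler--Lagrange equations of Proposition \ref{P:SU(2)}; but the displayed identities say precisely that it is obtained from $\phi_A$ by the gauge action of $(\lambda,\mu)\in\mathbb{T}^2$, hence $[\Ad w\circ\phi_A]=[\phi_A]$.

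I do not expect a genuine obstacle here: the whole argument is formal once one has the explicit form of $\phi_A$ and the observation that $\Ad w$ acts on monomials by scalars. The only points requiring a little care are keeping the sign conventions in the Heisenberg phase factors consistent, and noting that the hypothesis $A\notin\GL(2,\mathbb{Z})$ (equivalently $ps=qr$) is used only to guarantee that $\phi_A$ is a well-defined homomorphism on the commutative algebra $\SU(2,\mathbb{C})$ in the first place, and plays no role in the invariance computation itself.
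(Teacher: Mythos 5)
Your proof is correct, and it is essentially the argument the paper intends: the corollary is stated without proof, and the evident justification is exactly your observation that $\Ad w$ multiplies each monomial $U^pV^q$ by the unimodular scalar $e^{2\pi i\theta(mq-np)}$, so $\Ad w\circ\phi_A$ lies in the $\mathbb{T}^2$-gauge orbit of $\phi_A$ (the same mechanism underlying the earlier corollary that $[u]$ is invariant). One minor caveat: your parenthetical ``equivalently $ps=qr$'' is not actually equivalent to $A\notin\GL(2,\mathbb{Z})$ (which only says $\det A\neq\pm1$); the condition $ps-qr=0$ is what the paper's Theorem \ref{T:coercivemap} really uses to make $\phi_A$ a homomorphism, an imprecision inherited from the paper itself and irrelevant to your invariance computation.
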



\end{document}